\newtheorem{theorem}{Theorem}[section]
\newtheorem{lemma}[theorem]{Lemma}
\theoremstyle{definition}
\newtheorem{example}[theorem]{Example}
\theoremstyle{remark}
\newtheorem{remark}[theorem]{Remark}
\newcommand{\fe}{\mathrm{e}}
\newcommand{\eps}{\varepsilon}
\newcommand{\bR}{{\mathbb R}}
\newcommand{\bB}{{\bf B}}
\newcommand{\bT}{{\mathbb T}}
\newcommand{\bN}{{\mathbb N}}
\newcommand{\bZ}{{\mathbb Z}}
\newcommand{\bu}{{\bf u}}
\newcommand{\bv}{{\bf v}}
\newcommand{\bz}{{\bf z}}
\newcommand{\bw}{{\bf w}}
\newcommand{\bp}{{\bf p}}
\newcommand{\bq}{{\bf q}}
\newcommand{\bx}{\mathbf{x}}
\newcommand{\by}{{\mathbf y}}
\newcommand{\bh}{{\mathbf h}}
\newcommand{\br}{{\mathbf r}}
\newcommand{\bE}{\mathbf{E}}
\numberwithin{equation}{section}
\begin{document}

\title[Vlasov-Poisson equation in 3D with strong magnetic field]{Uniformly accurate methods for three dimensional Vlasov equations under strong magnetic field with varying direction}


\author[Ph. Chartier]{Philippe Chartier}
\address{\hspace*{-12pt}Ph.~Chartier: Univ Rennes, INRIA-MINGuS, CNRS, IRMAR-UMR 6625, F-35000 Rennes, France}
\email{Philippe.Chartier@inria.fr}

\author[N. Crouseilles]{Nicolas Crouseilles}
\address{\hspace*{-12pt}N.~Crouseilles: Univ Rennes, INRIA-MINGuS, CNRS, IRMAR-UMR 6625, F-35000 Rennes, France}
\email{nicolas.crouseilles@inria.fr}

\author[M. Lemou]{Mohammed Lemou}
\address{\hspace*{-12pt}M.~Lemou: Univ Rennes, CNRS,  INRIA-MINGuS, IRMAR-UMR 6625, F-35000 Rennes, France}
\email{mohammed.lemou@univ-rennes1.fr}

\author[F. M\'{e}hats]{Florian M\'{e}hats}
\address{\hspace*{-12pt}F.~M\'{e}hats: Univ Rennes,  INRIA-MINGuS, CNRS, IRMAR-UMR 6625, F-35000 Rennes, France}
\email{florian.mehats@univ-rennes1.fr}

\author[X. Zhao]{Xiaofei Zhao}
\address{\hspace*{-12pt}X.~Zhao: School of Mathematics and Statistics, Wuhan University, 430072 Wuhan, China; Univ Rennes,  INRIA-MINGuS, CNRS, IRMAR-UMR 6625, F-35000 Rennes, France}
\email{zhxfnus@gmail.com}

%

\date{}

\dedicatory{}

\begin{abstract}
In this paper, we consider the three dimensional Vlasov equation with an inhomogeneous, varying direction, strong magnetic field. Whenever the magnetic field has constant intensity, the oscillations generated by the stiff term are periodic. The homogenized model is then derived and several state-of-the-art multiscale methods, in combination with the Particle-In-Cell discretisation, are proposed for solving the Vlasov-Poisson equation. Their accuracy as much as their computational cost remain essentially independent of the strength of the magnetic field. The proposed schemes thus allow large computational steps, while the full gyro-motion can be restored by a linear interpolation in time. In the linear case, extensions are introduced for general magnetic field (varying intensity and direction). Eventually, numerical experiments are exposed to illustrate the efficiency of the methods and some long-term simulations are presented.
 \\ \\
{\bf Keywords:} Vlasov-Poisson equation, Three dimensions, Strong magnetic field, Varying direction, Uniformly accurate method, Particle-In-Cell. \\ \\
{\bf AMS Subject Classification:} 65L05, 65L20, 65L70.
\end{abstract}

\maketitle

\section{Introduction}
Vlasov models have been widely considered for modelling the dynamics of plasmas as encountered in magnetic fusion devices known as a tokamaks, where a strong external magnetic field is applied so as to confine the charged
particles. In this paper, we consider the three dimensional Vlasov-Poisson equation with a strong non-homogeneous magnetic field whose direction may vary  \cite{degond,Golse,SonnendruckerBook}
\begin{subequations}\label{eq:1}
\begin{align}
&\partial_t f^\eps(t,\bx, \bv) +\bv\cdot\nabla_\bx f^\eps(t,\bx, \bv)+\left(\bE(t,\bx)+\frac{1}{\eps}\bv\times \bB(\bx)\right)\cdot\nabla_\bv f^\eps(t,\bx, \bv)=0,\\
&\nabla_\bx\cdot \bE(t,\bx)=\int_{\bR^3}f^\eps(t,\bx,\bv)d\bv-n_i,\\
&f^\eps(0,\bx,\bv)=f_0(\bx,\bv),
\end{align}
\end{subequations}
where, for a given $T>0$,
$$
f^\eps: (t,\bx, \bv) \in [0,T] \times \bR^3 \times \bR^3 \mapsto f^\eps(t,\bx, \bv) \in \bR
$$
is the unknown,
$$f_0: (\bx, \bv) \in \bR^3 \times \bR^3 \mapsto f_0(\bx, \bv) \in \bR
$$
a given initial distribution, where
$$
\bB: \bx \in \bR^3 \mapsto \bB(\bx)\in \bR^3
$$
denotes the external magnetic field,
$$
\bE: (t,\bx) \in \bR^+\times\bR^3 \mapsto \bE(t,\bx) \in \bR^3
$$ the self-consistent electric-field function, $0<\eps\leq1$ a dimensionless parameter inversely proportional to the strength of the magnetic field and  $n_i\geq0$ the ion density of the background.  The system \eqref{eq:1} has a lot of invariants and
we will be interested in particular in the Hamiltonian defined by 
\begin{equation}\label{energy}
{\mathcal H}(t):=\int_{\bR^3}\int_{\bR^3}\frac{1}{2}|\bv|^2f^\eps(t,\bx,\bv)d\bx d\bv+\frac{1}{2}\int_{\bR^3}|\bE(t,\bx)|^2d\bx.
\end{equation}

The above Vlasov-Poisson model (\ref{eq:1}) is derived from the three dimensional Vlasov-Maxwell equations by considering the electrostatic approximation. Unlike some asymptotically reduced models such as the gyrokinetic equations \cite{Michel, straymond} or the drift-kinetic limit equations \cite{bostan, Sonnendrucker1, filbet},  model (\ref{eq:1}) is set at the kinetic scale and is of paramount importance for studying the plasma dynamics in the tokamak device.

In the strong magnetic field limit regime, the charged particles exhibit very fast rotations with cyclotron period proportional to $\eps$,  while remaining confined along the magnetic line. In such a case, the small parameter $0<\eps\ll1$ renders the solution $f^\eps(t,\bx,\bv)$ of (\ref{eq:1}) highly-oscillatory in time. Classical numerical integrators such as splitting  or finite-difference schemes thus require time steps smaller than the cyclotron period in order to accurately capture the dynamics, thus implying severe computational burden. Recent efforts  have aimed at designing numerical schemes which allow step-sizes much larger than the cyclotron period. Upon assuming that the magnetic field has a fixed direction in space, i.e. that
 $$
 \bB(\bx)=(0,0,b(\bx))^T,\quad b(\bx)>0
 $$
(a popular choice both for formal and rigorous analyses \cite{bostan, degond, Golse}), several multiscale numerical methods have been proposed  \cite{vpb,UAVP4d,Yang,Filbet_rod,Filbet_rod2,Sonnendrucker2}.
Among them, Filbet et al. constructed Particle-In-Cell schemes in the spirit of asymptotic preserving techniques \cite{Shi}, which, as $\eps\to0$,  are consistent with the drift-limit model \cite{Filbet_rod} or the gyrokinetic model \cite{Yang,Filbet_rod2}. These schemes are simple and highly accurate, but the gyro-motion is lost in the limit regime. In contrast, the schemes proposed in \cite{vpb,UAVP4d} capture all the information of the kinetic models with an accuracy uniform with respect to $0<\eps\leq1$. These \emph{uniformly accurate (UA)} schemes have  computational cost as well as accuracy  totally independent of $\eps$ (we refer to \cite{CPC} for a comparison of UA scheme with other multiscale methods). In order to design UA schemes for kinetic models, different numerical approaches may be used: (i) The two-scale formulation technique relies upon an explicit separation of the fast and slow times and allows to smooth out the oscillations \cite{vpb,UAVP2d,UAVP4d}. (ii) The multi-revolution composition methods, in the spirit of heterogeneous multiscale method \cite{HMM}, are also  UA, as confirmed in the recent paper \cite{FLR}. Both approaches exploit the periodicity of the solution of the stiff part of the equation. For instance,  our recent work \cite{vpb} isolates the dominant oscillation frequency owing to a confining property in two dimensions. However, the general case of a strong  magnetic field with varying direction, i.e.
$$\bB(\bx)
=(b_1(\bx),b_2(\bx),b_3(\bx))^T,$$
has been barely considered so far for the Vlasov-Poisson equation (\ref{eq:1}) due to its complicated highly-oscillatory behaviour in three dimensions. Let us also mention recent developments around symplectic Particle-In-Cell method, which allows
for good preservation of invariants for very long time (see \cite{Lubich, qin, gempic}).

In this work, we propose efficient numerical schemes  for solving the three dimensional Vlasov-Poisson equation (\ref{eq:1}) in the strong magnetic field regime by combining multiscale strategies with the Particle-In-Cell (PIC) discretisation. First, we consider the case of a magnetic field with constant intensity $|\bB(\bx)|=const$, for which, as already pointed out in \cite{AAT,straymond}, the motion induced by the stiff Lorentz term $\frac{1}{\eps}\bv
\times\bB(\bx)$ in (\ref{eq:1}) is periodic in time. Taking advantage of this observation, we derive the limit model of (\ref{eq:1}) by using averaging methods \cite{AAT}, and then introduce three UA schemes, namely (i) the multi-revolution composition (MRC) method, (ii) the two-scale formulation (TSF) method and (iii) the micro-macro (MM) method. All three are of uniform second order in time for all $\eps\in]0,1]$, though have specific pros and cons: for instance, MRC methods are phase-space volume preserving, while MM easily allows for the full recovery of the gyro-motion. To the best of our knowledge, this key-feature is new and paves the way for an extension to the case of a magnetic field with varying intensity. In this situation, we indeed introduce, under the PIC discretisation, a reparametrization of time  to re-normalise the magnetic field. Within this framework, each particle carries its own fictitious time. Hence, and in order to avoid the occurrence of multiple frequencies, we drop in this situation the Poisson part of (\ref{eq:1}) and consider instead the case of an external electric field $\bE(t,\bx)$ (this somehow simplifying assumption is relevant as it marks an important first step towards the solution of the full problem). In order to re-synchronise all particles (a necessary step in order to provide an approximation of $f^\eps(t,\bx,\bv)$), we then use the interpolation strategy of MM which ensures uniform second order except for the angular variable. Eventually, numerical experiments  are presented in order to validate uniform accuracy and to compare  the various methods. In particular, we  simulate the dynamics of equation (\ref{eq:1}) in a three dimensional screw-pinch setup \cite{Michel}.

The remaining  of the paper is now organized as follows. Section \ref{sec:lm} considers the limit model of (\ref{eq:1}) with a constant intensity $\bB(\bx)$ and Section \ref{sect:nm} introduces the three aforementioned UA schemes in this situation: Subsection \ref{sect:mrc} is concerned with MRC method, Subsection \ref{sec:2scale} with TSF method and Subsection \ref{sec:mm} with MM method. Extensions to the case of a varying intensity are presented in Section \ref{sec:ex}. Finally, numerical results with concluding remarks are exposed in Section \ref{sect:nr}.

\section{Averaging}\label{sec:lm}
A general assumption throughout this paper is that  the magnetic field  is bounded from below, i.e. that $|\bB(\bx)|\geq c_0$ for all $\bx\in\bR^3$ for some $c_0>0$ independent of $\eps$. In this section, we further assume that the external magnetic field has constant norm
$$|\bB(\bx)|\equiv const>0,\quad \bx\in\bR^3,$$
so that the stiff part of equation (\ref{eq:1}) generates periodic motion.  This setup has also been considered  in \cite{Golse}.  A possible instance of such a $\bB$ is given by  $\bB(\bx)=(B_1(x_1,x_2),$ $B_2(x_1,x_2),B_3(x_1,x_2))$ where
$$B_3=\sqrt{\|B_1^2+B_2^2\|_{L^\infty(\bR^2)}-B_1^2-B_2^2},$$
with  $(B_1(x_1,x_2),B_2(x_1,x_2))\in L^\infty(\bR^2)$ and $\partial_{x_1}B_1+\partial_{x_2}B_2=0$. It can be verified that
$|\bB(\bx)|^2\equiv\|B_1^2+B_2^2\|_{L^\infty(\bR^2)}$ and $\nabla_x\cdot \bB\equiv0$. In such a case, we are able to apply a recently developed averaging method to quickly obtain the limit model of (\ref{eq:1}) as $\eps\to0$.

\begin{lemma}
If $|\bB(\bx)|\equiv b$ for some constant $b>0$, then the solution of
\begin{equation}\label{lm:1}\partial_t \tilde{f}^\eps(t,\bx, \bv) +\frac{1}{\eps}\bv\times \bB(\bx)\cdot\nabla_\bv \tilde{f}^\eps(t,\bx, \bv)=0,
\end{equation}
 is $2\pi/b$-periodic with respect to the  fast time-variable $t/\eps$.
\end{lemma}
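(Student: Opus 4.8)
The plan is to solve (\ref{lm:1}) by the method of characteristics, exploiting that the equation contains no transport in $\bx$: the position $\bx$ enters only as a parameter, so for each fixed $\bx$ the equation is a linear transport equation in the velocity variable alone. First I would write down the associated characteristic ODE
\begin{equation*}
\frac{d}{dt}\bv(t)=\frac{1}{\eps}\,\bv(t)\times\bB(\bx)=\frac{1}{\eps}M(\bx)\bv(t),
\end{equation*}
where $M(\bx)$ is the $3\times3$ antisymmetric matrix representing $\bv\mapsto\bv\times\bB(\bx)$. Since $M(\bx)$ is time-independent, its flow is the linear map $\bv(0)\mapsto\bv(t)=\exp\!\big(\tfrac{t}{\eps}M(\bx)\big)\bv(0)=:R(t/\eps)\bv(0)$, and, $\tilde f^\eps$ being constant along characteristics, the solution of (\ref{lm:1}) reads $\tilde f^\eps(t,\bx,\bv)=\tilde f^\eps\big(0,\bx,R(t/\eps)^{-1}\bv\big)$. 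It therefore suffices to prove that $\tau\mapsto R(\tau)=\exp(\tau M(\bx))$ is $2\pi/b$-periodic.

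The central point, and the only place where the hypothesis $|\bB(\bx)|\equiv b$ enters, is the spectral structure of $M(\bx)$. Being antisymmetric, $M(\bx)$ has eigenvalues $0$ and $\pm i|\bB(\bx)|=\pm ib$, the nonzero pair having modulus exactly $b$ by the constant-norm assumption; equivalently, one checks directly the algebraic identity $M(\bx)^3=-b^2M(\bx)$, which holds precisely because $|\bB(\bx)|=b$. Writing $M(\bx)=b\,N(\bx)$ so that $N(\bx)^3=-N(\bx)$, the Taylor series of the exponential collapses (Rodrigues' formula) to
\begin{equation*}
R(\tau)=\exp(\tau M(\bx))=I+\sin(b\tau)\,N(\bx)+\big(1-\cos(b\tau)\big)N(\bx)^2,
\end{equation*}
which is manifestly $2\pi/b$-periodic in $\tau$. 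Substituting $\tau=t/\eps$ shows that $R(t/\eps)$, and hence $\tilde f^\eps$, is $2\pi/b$-periodic in the fast variable $t/\eps$, as claimed.

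I do not anticipate any genuine obstacle, since the argument amounts to the elementary fact that a rigid rotation about a fixed axis with angular speed $b$ has period $2\pi/b$. The one subtlety worth emphasizing is conceptual rather than technical: although the rotation axis $\bB(\bx)/b$ depends on $\bx$, the period $2\pi/b$ does not, as it is governed solely by the norm $|\bB(\bx)|$. It is exactly this decoupling of period from axis that breaks down under a varying intensity $|\bB(\bx)|$, where each $\bx$ (equivalently, in the PIC setting, each particle) would carry its own frequency, which motivates the time-reparametrization introduced for that case.
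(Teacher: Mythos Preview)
Your proof is correct and follows essentially the same route as the paper: both argue via the characteristics $\dot{\bx}=0$, $\dot{\bv}=\tfrac{1}{\eps}\bv\times\bB(\bx)$ and invoke Rodrigues' formula to exhibit the explicit $2\pi/b$-periodic rotation of $\bv$ in $t/\eps$. Your derivation of Rodrigues' formula from the identity $M(\bx)^3=-b^2M(\bx)$ is slightly more detailed than the paper's, which simply states the resulting expression, but the substance is identical.
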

\begin{proof}
The characteristics of (\ref{lm:1})
$$
\dot{\bx}(t)=0,\quad \dot{\bv}(t)=\frac{1}{\eps}\bv(t)\times\bB(\bx(t)),\quad t>0,$$
have a periodic solution in $t/\eps$ which can be obtained, for instance, by Rodrigues' formula
\begin{align}\label{rodrigues}
\bx(t)&=\bx(0), \nonumber \\
\bv(t)&=\cos(b t/\eps)\bv(0)+(1-\cos(b t/\eps))(\bB(\bx(0))\cdot\bv(0))\bB(\bx(0))
-\sin(b t/\eps)\bv(0)\times\bB(\bx(0)).
\end{align}
The statement of the lemma is now an immediate consequence.
\end{proof}
Using the observation above, we may apply the following theorem from \cite{AAT}:
\begin{theorem}\label{thm}
 Consider a  transport equation of form
  $$\partial_tf^\eps(t,\by)+\left[\frac{G(\by)}{\eps}+K(\by)\right]\cdot\nabla_\by f^\eps(t,\by)=0,\quad f^\eps(0,\by)=f_0(\by),$$
  where the flow map $\Phi_t$ of
  $$\dot{\by}(t)=G(\by(t))$$
  is assumed to be $2\pi$-periodic. There exist two {\em formal} vector fields $G^\eps(\by)$ and $K^\eps(\by)$ satisfying$$\frac{G(\by)}{\eps}+K(\by)=\frac{G^\eps(\by)}{\eps}+K^\eps(\by) \quad \mbox{ and } \quad [G^\eps, K^\eps]=0,$$
 such that the system
\begin{subequations} \label{eq:com}
\begin{numcases}
\,\partial_\tau g(t,\tau,\by)+G^\eps(\by)\cdot\nabla_\by g(t,\tau,\by)=0,\\
\partial_t g(t,\tau,\by)+K^\eps(\by)\cdot\nabla_\by g(t,\tau,\by)=0,\\
g(0,0,\by)=f_0(\by).
\end{numcases}
\end{subequations}
has a unique formal solution independently of the order in which the equations are solved. Moreover, for all positive time we have $f^\eps(t,\by)=g(t,t/\eps,\by)$ and the first two terms of $K^\eps=K^{[1]}+\eps K^{[2]}+O(\eps^2)$ may be computed as follows
  $$K^{[1]}=\Pi K_\tau,\quad K^{[2]}=-\frac{1}{2}\Pi\int_0^\tau[K_s,K_\tau] ds,\quad \mbox{ with } \quad
  K_\tau(\by):=\left(D_\by\Phi_\tau(\by)\right)^{-1}(K\circ\Phi_\tau)(\by), $$
 with $\Pi h := 1/(2\pi) \int_0^{2\pi} h(\tau) d\tau$.
\end{theorem}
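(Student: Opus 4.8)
The plan is to read the statement as a normal-form / averaging result and to organize the proof around three ingredients: a two-scale consistency condition, a reformulation of the commutation requirement as an infinitesimal-symmetry condition, and an order-by-order construction of the averaged field in the frame co-rotating with the fast flow.

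First I would pin down the two-scale structure. Writing $f^\eps(t,\by)=g(t,t/\eps,\by)$ and applying the chain rule turns the original transport equation into $\partial_t g+\tfrac1\eps\partial_\tau g+(\tfrac{G}\eps+K)\cdot\nabla_\by g=0$. If $g$ solves the split system, I may substitute $\partial_\tau g=-G^\eps\cdot\nabla_\by g$ and $\partial_t g=-K^\eps\cdot\nabla_\by g$, and the function $h(t,\by):=g(t,t/\eps,\by)$ then satisfies $\partial_t h+(\tfrac{G^\eps}\eps+K^\eps)\cdot\nabla_\by h=0$; hence it reproduces the original equation \emph{precisely} when $\tfrac{G^\eps}\eps+K^\eps=\tfrac G\eps+K$, while $g(0,0,\by)=f_0$ matches the initial datum, giving $h=f^\eps$ by uniqueness of characteristics. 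For uniqueness and order-independence of $g$ itself I would invoke the elementary identity $[\,G^\eps\cdot\nabla,\,K^\eps\cdot\nabla\,]=[G^\eps,K^\eps]\cdot\nabla$, so that $[G^\eps,K^\eps]=0$ makes the two solution flows commute and $g$ equals their common composition applied to $f_0$, in either order.

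The core existence claim I would reduce to a symmetry problem. Since $K^\eps=(\tfrac G\eps+K)-\tfrac{G^\eps}\eps$ and $[G^\eps,G^\eps]=0$, one has $[G^\eps,K^\eps]=[G^\eps,V]$ with $V:=\tfrac G\eps+K$, so the requirement $[G^\eps,K^\eps]=0$ is equivalent to demanding that $G^\eps=G+O(\eps)$ be an infinitesimal symmetry of the full field $V$. I would build it as a formal series $G^\eps=G+\eps G^{[1]}+\cdots$: expanding $[G^\eps,V]=0$ produces, at each order, a homological equation $[G,G^{[n+1]}]=Y^{[n]}$ with $Y^{[n]}$ assembled from lower orders. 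Everything hinges on inverting $\mathrm{ad}_G=[G,\cdot]$, and this is exactly where the periodicity of $\Phi_\tau$ from the Lemma is used: a direct computation gives the identity $\tfrac{d}{d\tau}K_\tau=[G,K_\tau]$ for the pullback $K_\tau=(D_\by\Phi_\tau)^{-1}(K\circ\Phi_\tau)$, so that $\Pi[G,K_\tau]=\Pi\tfrac{d}{d\tau}K_\tau=0$ and the operator $X\mapsto\Pi(\Phi_\tau^\ast X)$ is a projection onto $\ker\,\mathrm{ad}_G$. Consequently $[G,X]=Y$ is solvable exactly when the averaged pulled-back source vanishes, and one verifies this holds at every order (at first order $\Pi\,\Phi_\tau^\ast[G,K]=0$ by $2\pi$-periodicity). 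I expect this solvability-plus-normalization step to be the main obstacle: periodicity unlocks solvability of each homological equation, but fixing the residual kernel freedom through the correct (stroboscopic) normalization is what makes the decomposition canonical and matches the stated coefficients.

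Finally I would extract the first two coefficients in the co-rotating frame $\by=\Phi_\tau(\bz)$, where the characteristics of $V$ become $\tfrac{d\bz}{d\tau}=\eps\,K_\tau(\bz)$, a rapidly oscillating system with $2\pi$-periodic forcing. First-order averaging gives $K^{[1]}=\Pi K_\tau$ directly, consistent with $[G,K^{[1]}]=\Pi[G,K_\tau]=0$. The coefficient $K^{[2]}$ then follows from second-order stroboscopic averaging: introducing the zero-mean $2\pi$-periodic generator $w_\tau=\int_0^\tau(K_s-K^{[1]})\,ds$ and computing the $O(\eps^2)$ averaged field yields a two-time commutator, and a short Baker--Campbell--Hausdorff / Magnus reorganization collapses it into $K^{[2]}=-\tfrac12\,\Pi\int_0^\tau[K_s,K_\tau]\,ds$, the factor $\tfrac12$ and the nested double-time bracket being the signature of the second Magnus term. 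The only genuinely delicate point at this stage is tracking the constants and the part of $K^{[2]}$ that does \emph{not} lie in $\ker\,\mathrm{ad}_G$, so I would carry out the $\eps^2$ matching explicitly (for instance via a Fourier expansion in $\tau$) rather than rely on the first-order kernel normalization alone.
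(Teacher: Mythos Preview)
The paper does not actually prove this theorem: it is quoted verbatim from the companion paper \cite{AAT} (``An averaging technique for transport equations''), and the surrounding text only applies the result to compute $K^{[1]}$ for the Vlasov characteristics. So there is no in-paper proof to compare your proposal against.

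That said, your outline is a sound and essentially standard route to such a statement. The two-scale consistency check and the reduction of $[G^\eps,K^\eps]=0$ to an infinitesimal-symmetry condition for $G^\eps$ are correct, and the pullback identity $\tfrac{d}{d\tau}K_\tau=[G,K_\tau]$ together with $2\pi$-periodicity is precisely what makes the homological equations solvable order by order (the averaging projector $\Pi$ kills the range of $\mathrm{ad}_G$ on pulled-back fields). Your plan to read off $K^{[1]}$ and $K^{[2]}$ in the co-rotating variable via first- and second-order stroboscopic averaging / Magnus expansion is also the natural one, and the form $K^{[2]}=-\tfrac12\,\Pi\!\int_0^\tau[K_s,K_\tau]\,ds$ is exactly the second Magnus/averaging term. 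The one point you flag yourself---fixing the kernel freedom by the stroboscopic normalization so that the decomposition is canonical---is indeed the only place where care is needed; once a normalization (e.g.\ $\Phi^\eps_0=\mathrm{id}$ on the change-of-variable side, equivalently a choice of representative in $\ker\,\mathrm{ad}_G$ at each order) is imposed, the coefficients are uniquely determined and match the stated formulas.
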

\begin{remark}
If $K^\eps$ is truncated at order $k$ in $\eps$ and $G^\eps=G(y)+\eps K(y)-\eps K^\eps$, then the order in which the equations in (\ref{eq:com}) are solved does matter. However, the difference between the corresponding two solutions is also of order $\eps^k$.
\end{remark}
Without loss of generality, we assume in the rest of this section that $|\bB(\bx)|\equiv1$ to derive the averaged model of equation (\ref{eq:1})  as obtained from Theorem \ref{thm}. Details of the derivation may be found in \cite{AAT} and we thus content ourselves with a sketch of the computations:
from (\ref{eq:1}), we have
$$G=\begin{pmatrix}\mathbf{0}\\ \bv\times\bB\end{pmatrix},\quad K=\begin{pmatrix}\bv\\ \bE\end{pmatrix},$$
and the flow map generated by $G$ is given by
$$\Phi_\tau(\by)=\begin{pmatrix}\bx\\\cos(\tau)\bv+\sin(\tau)\bv\times \bB+(1-\cos(\tau))(\bB\cdot\bv)\bB \end{pmatrix},\quad
\by=\binom{\bx}{\bv}.$$
Then
$$D_\by\Phi_\tau(\by)=\begin{pmatrix}I_3&\mathbf{0}\\N_1&N_2 \end{pmatrix},$$
where
\begin{align*}
&N_1=\sin(\tau) \, (\bv\times \nabla_\bx \bB) +(1-\cos(\tau))[(\bB\cdot\bv)\nabla_\bx\bB+\bB(\bv^T \nabla_\bx \bB)],\\
&N_2=\cos(\tau)I_3-\sin(\tau)(\bB\times I_3)+(1-\cos(\tau))\bB\bB^T
\end{align*}
where we have denoted $\bv\times \nabla_\bx \bB=[\bv\times \partial_{x_1} \bB,\bv\times \partial_{x_2} \bB,\bv\times \partial_{x_3} \bB]$ and similarly for $\bv \times I_3$.
A straightforward calculation then leads to
$$\Pi\left((D_\by \Phi_\tau(\by))^{-1}(K\cdot\Phi_\tau)\right)
=\binom{(\bB\cdot\bv)\bB}{\bB_\bv}
$$
where
\begin{align*}
  \bB_\bv=&\bB\bB^T\left[\bE-\frac{1}{2}(\bv\times \nabla_\bx \bB)(\bv\times\bB)
  +M\bv-\frac{5}{2}M(\bB\cdot\bv)\bB\right]-\frac{1}{2}M\left[\bv-2(\bB\cdot\bv)\bB\right]\\
  &-\frac{1}{2}\bB\times I_3\left[M(\bv\times\bB)+(\bv \times \nabla_\bx \bB)(\bB\cdot\bv)\bB\right],
\end{align*}
with
$$M=(\bB\cdot\bv) \nabla_\bx \bB+\bB(\bv^T \nabla_\bx \bB).$$
Eventually,
$$K^\eps=\binom{(\bB\cdot\bv)\bB}{\bB_\bv}+O(\eps),$$
and  limit model at leading order is
$$\partial_tg(t,\tau,\bx,\bv)+(\bB\cdot\bv)\bB\cdot\nabla_\bx g(t,\tau,\bx,\bv)+\bB_\bv\cdot\nabla_\bv g(t,\tau,\bx,\bv)=0,\quad t>0.$$
By taking $\tau=0$ and $f(t,\bx,\bv)=g(t,0,\bx,\bv)$, we get the leading order averaged model of (\ref{eq:1})  for stroboscopic times $t \in 2\pi \eps \bN$,
\begin{subequations}\label{eq:lm}
\begin{align}
&\partial_t f(t,\bx, \bv) +(\bB\cdot\bv)\bB\cdot\nabla_\bx f(t,\bx, \bv)+\bB_\bv\cdot\nabla_\bv f(t,\bx, \bv)=0,\\
&\nabla_\bx\cdot \bE(t,\bx)=\int_{\bR^3}f(t,\bx,\bv)d\bv-n_i,\\
&f(0,\bx,\bv)=f_0(\bx,\bv).
\end{align}
\end{subequations}
As we shall verify later numerically, we have
$$f^\eps(t,\bx,\bv)-f(t,\bx,\bv)=O(\eps),\quad 0<\eps\ll1, \quad  t \in2\pi \eps \bN.
$$

\section{Numerical method} \label{sect:nm}
In this section, we introduce numerical schemes for equation (\ref{eq:1}) under the assumption $|\bB(\bx)|\equiv1$. Taking advantage of the periodicity the solution of the stiff part, we apply state-of-art multiscale approaches in combination with  PIC discretisation. In this way, we obtain
schemes whose accuracy and computational cost are both independent of  $\eps\in]0,1]$.
Our starting point is the following PIC-representation of $f^\eps$ as used in, e.g.,  \cite{Filbet_rod,Sonnendrucker2,Hesthaven,SonnendruckerBook},
\begin{equation}\label{dirac}
f^\eps(t,\bx,\bv)\approx\sum_{k=1}^{N_p}\omega_k\delta(\bx-\bx_k(t))\delta(\bv-\bv_k(t)),\quad t\geq0,\ \bx,\bv\in\bR^2.
\end{equation}
The characteristic equations of model (\ref{eq:1}) for $1\leq k\leq N_p$ are then of the form
\begin{subequations}
\label{charact}
\begin{align}
   & \dot{\bx}_k(t)=\bv_k(t), \\
  &\dot{\bv}_k(t)=\bE(t,\bx_k(t))+\frac{1}{\eps}\bv_k(t)\times \bB(\bx_k(t)),\quad t>0, \\
   & \bx_k(0)=\bx_{k,0},\quad \bv_k(0)=\bv_{k,0}.
\end{align}
\end{subequations}
Noticing that
$$\nabla_\bx\cdot\bE(t,\bx)=\sum_{k=1}^{N_p}w_k\delta(\bx-\bx_k(t)),$$
we observe that the electric field $\bE$ in (\ref{charact}) has in fact no explicit dependence on time, i.e. $\bE(t,\bx)=\bE_{[X(t)]}(\bx)$  where $X(t)=(\bx_1(t),\ldots,\bx_{N_p}(t))$. We are in a position to  briefly present three different  UA methods.

\subsection{Multi-revolution composition method}\label{sect:mrc}
For a general exposition of  multi-revolution composition (MRC), we refer to \cite{MRM}. Here, we focus on a uniformly accurate second-order method.

\textbf{MRC framework.}
Suppose that we wish to solve equation (\ref{eq:1}) on $[0,T_f]$ for some $T_f>0$. Rescaling time in  (\ref{charact}) leads to (we omit the particle index for brevity)
\begin{subequations} \label{MRC charact}
\begin{align}
   & \dot{\bx}(t)=\eps\bv(t),\\
  &\dot{\bv}(t)=\eps\bE_{[X(t)]}(\bx(t))+\bv(t)\times \bB(\bx(t)), \quad 0<t\leq\frac{T_f}{\eps},\\
   & \bx(0)=\bx_{0},\quad \bv(0)=\bv_{0},
\end{align}
\end{subequations}
Since the stiff part of (\ref{MRC charact}) generates a  $2\pi$-periodic motion, equation (\ref{MRC charact}) is amenable to MRC \cite{MRM,M2AN}.
To do so, we write
\begin{equation}
\label{def_Tf}
\frac{T_f}{\eps}=2\pi M_f+T_r,\quad M_f=\lfloor{\frac{T_f}{ 2\pi\eps}\rfloor}\in\bN, \quad 0\leq T_r<2\pi.
\end{equation}
The 2nd order MRC method begins by choosing an integer $0<M_0\leq M_f$ and defining
\begin{equation}\label{alphabeta}\alpha=\frac{1}{2}\left(1+\frac{1}{M_0}\right),\quad \beta=\frac{1}{2}\left(1-\frac{1}{M_0}\right),\quad M=\frac{M_f}{M_0},\quad H=\eps M_0.
\end{equation}
Denoting $\bx^n\approx\bx(2\pi nM_0),\,\bv^n\approx\bv(2\pi nM_0)$,  the MRC scheme proceeds as follows
\begin{equation}
\label{MRC main}
\binom{\bx^{n+1}}{\bv^{n+1}}=\mathcal{E}_\beta(-2\pi)\mathcal{E}_\alpha(2\pi)\binom{\bx^{n}}{\bv^{n}},\quad 0\leq n\leq M-1,
\end{equation}
where $\mathcal{E}_\alpha(2\pi)$ denotes the value at time $2\pi$ of the flow of
\begin{numcases}
   \, \dot{\bx}(t)=\alpha H\bv(t), \nonumber\\
  \dot{\bv}(t)=\alpha H\bE_{[X(t)]}(\bx(t))+\bv(t)\times \bB(\bx(t)),\label{MRC alpha}
  \end{numcases}
and $\mathcal{E}_\beta(-2\pi)$ the value at time $(-2\pi)$ of the flow of
\begin{numcases}
   \, \dot{\bx}(t)=-\beta H\bv(t), \nonumber\\
  \dot{\bv}(t)=-\beta H\bE_{[X(t)]}(\bx(t))+\bv(t)\times \bB(\bx(t)).\label{MRC beta}
\end{numcases}
The solution at final time $T_f$ is then obtained by applying to $\binom{\bx^{M}}{\bv^{M}}$ the flow $\mathcal{E}_r(T_r)$ at time $T_r$ of
\begin{numcases}
   \, \dot{\bx}(t)=\eps \bv(t), \nonumber\\
  \dot{\bv}(t)=\eps \bE_{[X(t)]}(\bx(t))+\bv(t)\times\bB(\bx(t)). \label{remain flow}
\end{numcases}

\textbf{Splitting scheme.}
The full MRC scheme calls for the numerical evaluation of the sub-flows $\mathcal{E}_\alpha(2\pi)$, $\mathcal{E}_\beta(2\pi)$ and $\mathcal{E}_r(T_r)$. This is done here through a splitting, for instance of $\mathcal{E}_\alpha$, in
\begin{equation}\label{split subflow}
\mathcal{E}_\alpha^{\bx}(t):\left\{\begin{split}
 &\dot{\bx}(s)=\alpha H\bv(s), \\
  &\dot{\bv}(s)=0,  \quad 0<s\leq t,
  \end{split}\right.\quad \mbox{and }\quad
  \mathcal{E}_\alpha^{\bv}(t):\left\{\begin{split}
 &\dot{\bx}(s)=0, \quad 0<s\leq t,\\
  &\dot{\bv}(s)=\alpha H\bE_{[X(s)]}(\bx(s))+\bv(s)\times \bB(\bx(s)).
  \end{split}\right.
  \end{equation}
 Note that both $\mathcal{E}_\alpha^{\bx}(t)$ and $\mathcal{E}_\alpha^{\bv}(t)$ can be exactly integrated. The exact flow of $\mathcal{E}_\alpha^{\bx}(t)$ is clearly
  $$\bx(t)=\bx(0)+t\alpha H\bv(0),\quad \bv(t)=\bv(0),\quad t\geq0,$$
while the exact flow of $\mathcal{E}_\alpha^{\bv}(t)$, by using the Rodrigues' rotation formula,  can also be written explicitly
  \begin{align*}
  \bx(t)=&\bx(0), \\
  \bv(t)=&\cos(t)\bv(0)+\sin(t)\bv(0)\times \bB+\alpha H\sin(t)\bE+\alpha H(t-\sin(t))(\bB\cdot
  \bE)\bB+\alpha H(1-\cos(t))\bE\times\bB\\
  &+(1-\cos(t))(\bB\cdot\bv(0))\bB,\quad t\geq0,
  \end{align*}
  where $\bE=\bE_{[X(0)]}(\bx(0))$ and $\bB=\bB(\bx(0))$. In our experiments, we shall take the value of the (micro) time step $h=2\pi/M$, so that
  $$\mathcal{E}_\alpha(2\pi)\approx
  \left(\mathcal{E}_\alpha^{\bx}(h/2)\mathcal{E}_\alpha^{\bv}(h)\mathcal{E}_\alpha^{\bx}(h/2)\right)^M.$$
Approximations for $\mathcal{E}_\beta(2\pi)$ and $\mathcal{E}_r(T_r)$ are obtained in a similar way. It may then be proved (see \eqref{MRC main})
that the error of MRC is of size  $O(M^{-2})$ for a computational cost of size $M^2$, making the overall scheme of order
one\footnote{Under the assumption that $\bE(t,\bx)\in C^2(\bR^+\times\bR^3;\bR^3)$ and $\bB(\bx)\in C^2(\bR^3;\bR^3)$ in (\ref{charact}).}.

It remains to comment on what happens when the user-controlled $M$ increases to the limit where $M_0$ reaches the critical value $M_0=1$, for which $\alpha=1,\,\beta=0$ (\ref{alphabeta}) and $\mathcal{E}_\beta(-2\pi) \equiv id$. In this  case, the full MRC scheme may be regarded as just the discretisation of  equation (\ref{MRC charact}) by Strang's method with time step $h$. Therefore, as soon as $M>0$ implies $M_0=T_f/\eps/(2\pi)/M<1$, we replace MRC method by  Strang splitting with time step $h=2\pi/M$. Finally, note that all vector fields involved in MRC are divergence free so that their exact flows are \textbf{phase-space volume preserving} as is the MRC method itself.


\subsection{Two-scale formulation method}\label{sec:2scale}
Two-scale formulation (TSF) methods have been developed in \cite{UAKG,APVP2d}. Their underlying rationale is to consider the fast time as an additional variable. In order to isolate the fast time, we apply the change of unknowns $(\bx(t),\bv(t)) \mapsto (\bx(t),\by(t))$ where
\begin{equation}\label{filter}
\by(t)=\cos(t/\eps)\bv(t)+(1-\cos(t/\eps))(\bB(\bx(t))\cdot\bv(t))\bB(\bx(t))
-\sin(t/\eps)\bv(t)\times\bB(\bx(t)).
\end{equation}
This leads to
\begin{numcases}
\, \dot{\bx}(t)=F_\bx(t/\eps,\bx(t),\by(t)),\nonumber\\
\dot{\by}(t)=F_\by(t/\eps,\bx(t),\by(t)),\quad t>0,\label{filter charact}\\
\bx(0)=\bx_0,\quad \by(0)=\bv_0, \nonumber
\end{numcases}
where
$$F_\bx(\tau,\bx,\by):=\cos(\tau)\by+(1-\cos(\tau))(\bB(\bx)\cdot\by)\bB(\bx)+\sin(\tau)\by\times\bB(\bx),
$$
and
\begin{align*}
F_\by(\tau,\bx,\by)=&\cos(\tau)\bE_{[X]}(\bx)+(1-\cos(\tau))(\bB(\bx)\cdot\bE_{[X]}(\bx))\bB(\bx)-\sin(\tau)\bE_{[X]}(\bx)\times\bB(\bx)\\
&-\frac{1}{2}\sin(2\tau)\bq_\tau(\by)-\frac{1}{2}(2\sin(\tau)-\sin(2\tau))\bq_\tau((\bB(\bx)\cdot\by)\bB(\bx))\\
&-\frac{1}{2}(1-\cos(2\tau))\bq_\tau(\by\times\bB(\bx))+\frac{1}{2}(2\cos(\tau)-\cos(2\tau)-1)\bp_\tau(\by)+\frac{1}{2}(3-4\cos(\tau)\\
&+\cos(2\tau))\bp_\tau((\bB(\bx)\cdot\by)\bB(\bx))+\frac{1}{2}(2\sin(\tau)-\sin(2\tau))\bp_\tau(\by\times\bB(\bx)),
\end{align*}
with the vector fields
\begin{align*}
&\bp_\tau(\bz):=((\nabla_\bx \bB(\bx)F_\bx(\tau,\bx,\by))\cdot\bz)\bB(\bx)+(\bB(\bx)\cdot\bz)(\nabla_\bx\bB(\bx)F_\bx(\tau,\bx,\by)),\\
&\bq_\tau(\bz):=\bz\times(\nabla_\bx \bB(\bx)F_\bx(\tau,\bx,\by)),\quad \bz\in\bR^3.
\end{align*}
Denoting $\bu(t)=\binom{\bx(t)}{\by(t)}$ and $F(\tau,\bu)=\binom{F_\bx(\tau,\bx,\by)}{F_\by(\tau,\bx,\by)}$, the two-scale formulation of  system (\ref{filter charact}) now reads
\begin{align}
&\partial_tU(t,\tau)+\frac{1}{\eps}\partial_\tau U(t,\tau)=F(\tau,U(t,\tau)),\quad t>0,\ \tau\in\bT,\label{two-scale}\\
&U(0,0)=\bu(0),\nonumber
\end{align}
where $\bT=[0,2\pi]$, and one recovers the solution of (\ref{filter charact}) by taking the diagonal, i.e.
$$U(t,t/\eps)=\bu(t),\quad t\geq0.$$
It remains to prescribe an appropriate initial data $U(0, \tau)$ to \eqref{two-scale} 
so that the solution $U$ has its derivatives uniformly bounded up to some order.  \\ \\
\textbf{Initial data.}
In order to derive $U(0,\tau)$, we follow the Chapman-Enskog procedure. From the decomposition
$$\underline{U}(t)=\Pi U(t,\cdot),\quad \bh(t,\tau)=U(t,\tau)-\underline{U}(t), \; \mbox{ with } \, \Pi U(t,\cdot) =\frac{1}{2\pi}\int_0^{2\pi} U(t, \tau)d\tau,$$
we split (\ref{two-scale}) into
\begin{numcases}
\, \dot{\underline{U}}(t)=\Pi F(\cdot,\underline{U}(t)+\bh(t,\cdot)),\quad t>0,\nonumber\\
\partial_t\bh(t,\tau)+\frac{1}{\eps}\partial_\tau\bh(t,\tau)=(I-\Pi)F(\tau,\underline{U}(t)+\bh(t,\tau)),\quad t>0,\ \tau\in\bT.
\nonumber
\end{numcases}
Denote $L=\partial_\tau,\,A=L^{-1}(I-\Pi)$ and we have
$$\bh(t,\tau)=\eps AF(\tau,\underline{U}(t)+\bh(t,\tau))-\eps L^{-1}\partial_t\bh(t,\tau).$$
Differentiate the above with respect to $t$ on both sides:
$$\partial_t\bh(t,\tau)=\eps A\nabla F(\tau,\underline{U}+\bh)(\dot{\underline{U}}+\partial_t\bh)-\eps L^{-1}\partial_t^2\bh(t,\tau).$$
By assuming that $\partial_t^2\bh=O(1)$ for $\eps\in]0,1]$, one gets $\partial_t\bh=O(\eps)$ and $\bh(t,\tau)$ has the first order asymptotic expansion:
$$\bh(t,\tau)=\eps AF(\tau,\underline{U}(t))+O(\eps^2).$$
Using the fact
$\underline{U}(0)=\bu(0)-\bh(0,0),$
one gets at initial time
$$\bh(0,\tau)=\bh^{1st}(\tau)+O(\eps^2),\quad \mbox{with}\quad \bh^{1st}(\tau):=\eps AF(\tau,\bu(0)),$$
and we denotes the first order  initial data as:
\begin{equation}\label{1st data}
U^{1st}(\tau):=\bu(0)+\bh^{1st}(\tau)-\bh^{1st}(0).
\end{equation}
In fact, one can show rigorously that the equation (\ref{two-scale}) with the well-prepared initial data $U(0,\tau)=U^{1st}(\tau)$ offers
\begin{equation}\label{bound}\partial_tU(t,\tau),\,\partial_t^2U(t,\tau)=O(1),\quad \eps\in]0,1].
\end{equation}
We refer the readers to \cite{UAKG} for the mathematical justification. The boundedness of the time derivatives (\ref{bound}) is the key to design UA schemes.

\textbf{Exponential integrator.}
Thanks to the two-scale formulation  (\ref{two-scale}) with the well-prepared initial data $U(0,\tau)=U^{1st}(\tau)$ from (\ref{1st data}), we can now directly apply the second order exponential integrator scheme proposed in \cite{UAVP4d} for integrating (\ref{two-scale}): Choose $N_\tau>0$ an even integer to uniformly discretize $\tau$ on $\bT$ and take a $\Delta t>0$ to define $t_n=n\Delta t$. Denote $U^n(\tau)\approx
U(t_n,\tau)$ for $n\geq0$ and let $U^0(\tau)=U(0,\tau)$. We update the $U^n$ for $n\geq1$ as
\begin{subequations}\label{EI}
\begin{align}
&\widehat{(U)}_l^1
=\fe^{-\frac{il\Delta t}{\eps}}\widehat{(U)}_l^0
+p_l\widehat{(F)}_l^0
+q_l\frac{1}{\Delta t}\left(\widehat{(F)}_l^{*,1}-\widehat{(F)}_l^0\right),\\
&\widehat{(U)}_l^{n+1}=\fe^{-\frac{il\Delta t}{\eps}}\widehat{(U)}_l^{n}
+p_l\widehat{(F)}_l^n
+q_l\frac{1}{\Delta t}\left(\widehat{(F)}_l^n-\widehat{(F)}_l^{n-1}\right),\quad n\geq1,
\end{align}
\end{subequations}
where for $n\geq0$,
$$
U^n(\tau)=\sum_{l=-N_\tau/2}^{N_\tau/2-1}\widehat{(U)}_l^n\fe^{il\tau},\quad
F^n(\tau)=\sum_{l=-N_\tau/2}^{N_\tau/2-1}\widehat{(F)}_l^n\fe^{il\tau},
\quad F^{*,1}(\tau)=\sum_{l=-N_\tau/2}^{N_\tau/2-1}\widehat{(F)}_l^{*,1}\fe^{il\tau},$$
and $F^n(\tau)=F(\tau,U^n(\tau)),\ F^{*,1}(\tau)=F(\tau,U^{*,1}(\tau))$
with
$$\widehat{(U)}_l^{*,1}
=\fe^{-\frac{il\Delta t}{\eps}}\widehat{(U)}_l^0
+p_l\widehat{(F)}_l^0,\quad U^{*,1}(\tau)=\sum_{l=-N_\tau/2}^{N_\tau/2-1}\widehat{(U)}_l^{*,1}\fe^{il\tau},$$
and
\begin{equation*}
p_l=\left\{\begin{split}
&\frac{i\eps}{l}\left(\fe^{-\frac{il\Delta t}{\eps}}-1\right),\quad l\neq0,\\
&\Delta t,\qquad\qquad\qquad\ \ \, l=0,\end{split}\right.
\qquad
q_l=
\left\{\begin{split}
&\frac{\eps}{l^2}\left(\eps-\eps\fe^{-\frac{il\Delta t}{\eps}}-il\Delta t\right),\quad l\neq0,\\
&\frac{\Delta t^2}{2},\qquad\qquad\qquad\qquad\quad\ \ l=0.\end{split}\right.
\end{equation*}
Suppose we have the numerical solution $U^n(\tau)=\binom{X^n(\tau)}{Y^n(\tau)}$ from the above scheme, then the numerical solution $\bx^n\approx\bx(t_n),\,\bv^n\approx\bv(t_n)$ of the original characteristics (\ref{charact}) reads:
\begin{align*}
&\bx^n=X^n(t_n/\eps),\quad n\geq1,\\
&\bv^n=\cos(t_n/\eps)Y^n(t_n/\eps)
+(1-\cos(t_n/\eps))(\bB(\bx^n)\cdot Y^n(t_n/\eps))\bB(\bx^n)
+\sin(t_n/\eps)Y^n(t_n/\eps)\times\bB(\bx^n).
\end{align*}

The derivation and convergence analysis of the above scheme can be found in \cite{UAVP4d}.
Since the filter (\ref{filter}) involves the magnetic field $\bB(\bx)$, the filtered system (\ref{filter charact}) which is less smooth than the original form (\ref{charact}), needs more regularity for optimal convergence of the algorithm. Assuming that $\bE(t,\bx)\in C^2(\bR^+\times\bR^3;\bR^3)$ and $\bB(\bx)\in C^3(\bR^3;\bR^3)$ in (\ref{charact}), the two-scale formulation (TSF) exponential integrator (\ref{EI}) gives uniform second order accuracy in terms of $\Delta t$ for all $\eps\in]0,1]$ and uniform spectral accuracy in terms of $N_\tau$ (due to periodicity):
$$O(\Delta t^2+N_\tau^{-m_0}).$$
 The total cost of the TSF method is
$O(\Delta t^{-1}N_\tau\log N_\tau)$.

\subsection{Micro-macro method}\label{sec:mm}
Now, we present the main new method of this work. It is based on the micro-macro (MM) decomposition that has been proposed very recently in \cite{NUA}. We shall for the first time consider this approach for the Vlasov-Poisson equation and propose a second order UA scheme. The same notations will be adopted from the previous subsection.

\textbf{MM decomposition.}
By the averaging theory \cite{Sanders}, it is known that for general oscillatory problem
\begin{equation}\label{ode}
\dot{\bu}(t)=F(t/\eps,\bu(t)),\quad t>0,
\end{equation}
 with $2\pi$-periodicity in $\tau$ of $F(\tau,\bu)$, the solution can be written as a composition
\begin{equation}\label{average}
\bu(t)=\Phi_{t/\eps}\circ\Psi_t\circ\Phi_0^{-1}(\bu(0)),
\end{equation}
where $\Phi_\tau(\bv)$ is a change of variable with $2\pi$-periodicity in $\tau$ for some $\bv$, and $\Psi_t(\bv)$ is the flow map of the autonomous equation with initial value $\bv$:
$$\dot{\Psi}_t(\bv)=F_0(\Psi_t(\bv)),\quad \Psi_0(\bv)=\bv,$$
for some field $F_0$.
Though (\ref{average}) is known to hold theoretically for some $\Phi_\tau$ and $\Psi_t$, the explicit formulas of $\Phi_\tau$ and $\Psi_t$ are not available. In fact, by plugging (\ref{average}) back to the equation (\ref{ode}),
the change of variable $\Phi_\tau$, the flow map $\Psi_t$ and the averaged field $F_0$ can be seen to  satisfy the relation
\begin{equation}
\frac{1}{\eps}\partial_\tau\Phi_\tau(\bv)+D_\bv\Phi_\tau(\bv)
F_0(\bv)=F(\tau,\Phi_\tau(\bv)),\label{constrant}
\end{equation}
and moreover by taking averaging with respect to $\tau\in[0,2\pi]$ on both sides of (\ref{constrant}), one can define $F_0$ with $\Phi_\tau$,
$$ F_0=(\Pi D_\bv\Phi_\tau)^{-1}\Pi F(\cdot,\Phi_\tau).$$
The above two equalities cannot completely determine $\Phi_\tau$ and $\Psi_t$, so the standard averaging method imposes an extra condition $\Pi\Phi_\tau=id$, which uniquely defines the change of variable $\Phi_\tau$ in an implicit way through (\ref{constrant}). In general, it is not possible to solve (\ref{constrant}) to find out the exact $\Phi_\tau$. However, we can define an approximated $\Phi_\tau$ through a $k$th-order iteration
\begin{equation}\label{iteration}\Phi_\tau^{[k+1]}=id+\eps A\left(F(\tau,\Phi_\tau^{[k]})-D_\bv\Phi_\tau^{[k]}F_0^{[k]}\right),\quad
k\in\bN,
\end{equation}
with initially
$$\Phi^{[0]}_\tau=id,\quad F_0^{[0]}=\Pi F,$$
which asymptotically gives
$$\Phi_\tau=\Phi_\tau^{[k]}+O(\eps^{k+1}).$$
As a compensation to the composition (\ref{average}) by using the approximated function $\Phi_\tau^{[k]}$, a defect $\bw^{[k]}$ needs to be introduced:
\begin{equation}\label{MM decomposition}
\bu(t)=\Phi_{t/\eps}^{[k]}\circ\Psi_t^{[k]}\circ(\Phi_0^{[k]})^{-1}(\bu(0))+\bw^{[k]}(t),
\end{equation}
to ensure that there are no asymptotical truncations made to the exact solution. The decomposition
(\ref{MM decomposition}) is referred as the micro-macro decomposition of the solution of (\ref{ode}).

As a matter of fact, the first order approximation $\Phi_\tau^{[1]}$ given by the micro-macro decomposition, i.e. $k=0$ in (\ref{iteration}), coincides with the first order Chapman-Enskog expansion that we introduced in the previous subsection:
$$\Phi_\tau^{[1]}(\br)=\br+\eps AF(\tau,\br)=:\Theta(\tau,\br).$$
Thus, (\ref{filter charact}) or (\ref{ode}) has the first order micro-macro decomposition:
\begin{equation}\label{MM}
\bu(t)=\Theta(t/\eps,\br(t))+\bw(t),\quad t\geq0,
\end{equation}
where the macro part representing the averaged equation reads
\begin{numcases}
\,\dot{\br}(t)=\Pi F\left(\cdot,\Theta(\cdot,\br(t))\right),\quad t>0,\label{macro}\\
\br(0)=\bu(0)-\eps AF(\tau,\bu(0))\vert_{\tau=0},\nonumber
\end{numcases}
and the  micro part representing the equation for the defect reads
\begin{numcases}
\,\dot{\bw}(t)=G( t/\eps,\br(t),\bw(t)),\label{micro}\quad t>0, \\
\bw(0)= \eps A\left[F(\tau,\bu(0))-F(\tau,\br(0))\right]\vert_{\tau=0},\nonumber
\end{numcases}
with
\begin{align*}
G(\tau,\br,\bw)
:=F\left(\tau,\Theta(\tau,\br)+\bw\right)-(I-\Pi)F(\tau,\br)
-\frac{d}{dt}\Theta(\tau,\br(t)).
\end{align*}
In the first order micro-macro decomposition, the macro part (\ref{macro}) is smooth containing no high-frequencies. As for the micro part, it can be shown that \cite{NUA}
$$\bw(t)=O(\eps^2),\quad \partial_t\bw(t)=O(\eps),\quad \partial_t^2\bw(t)=O(1).$$
Thanks to the reformulation (\ref{filter charact}), we are able to consider this micro-macro approach for the characteristics (\ref{charact}).

\textbf{An integration scheme.} Now based on the micro-macro decomposed systems (\ref{macro}) and (\ref{micro}), we are going to propose a second order integration for solving (\ref{ode}) which is a compact formulation of (\ref{filter charact}) with $\bu(t)=\binom{\bx(t)}{\by(t)}$
and $F(\tau, \bu)=\binom{F_\bx(\tau, \bx, \by)}{F_\by(\tau, \bx, \by)}$.
We solve the macro part (\ref{macro}) by a leap-frog finite difference scheme:
\begin{align*}
&\br^{n+1}=\br^{n-1}+2\Delta t\Pi F\left(\cdot,\Theta(\cdot,\br^n)\right),\quad n\geq1,\quad
\br^{1}=\br^0+\Delta t \Pi F\left(\cdot,\Theta(\cdot,\br^0)\right).
\end{align*}
For the micro part (\ref{micro}), we integrate the equation to have
\begin{align}\bw(t_{n+1})-\bw(t_n)&=\int_{t_n}^{t_{n+1}}G(t/\eps,\br(t),\bw(t))dt\nonumber\\
&=\int_{t_n}^{t_{n+1}}H(t/\eps,\br(t),\bw(t))dt-\Theta(t_{n+1}/\eps,\br(t_{n+1}))+
\Theta(t_{n}/\eps,\br(t_{n})),\label{micro appro}
\end{align}
where
$$
H(\tau,\br,\bw):=F\left(\tau,\Theta(\tau,\br)+\bw\right).
$$
Since $H(\tau,\br,\bw)$ is periodic in $\tau\in\bT$, so we have a Fourier expansion
$$H(\tau,\br,\bw)=\sum_{l\in\bZ}\widehat{H}_l(\br,\bw)\fe^{il\tau},$$
and the integration in (\ref{micro appro}) can be approximated as
 \begin{align*}
 &\int_{t_n}^{t_{n+1}}H(t/\eps,\br(t),\bw(t))dt
=\sum_{l\in\bZ}\int_{t_n}^{t_{n+1}}\widehat{H}_l(\br(t),\bw(t))\fe^{ilt/\eps}dt\\
&\hspace{-0.6cm}\approx \sum_{l\in\bZ}\int_{t_n}^{t_{n+1}}\left[\widehat{H}_l(\br(t_n),\bw(t_n))
+(t-t_n)\frac{d}{dt}\widehat{H}_l(\br(t_n),\bw(t_n))\right]\fe^{ilt/\eps}dt\\
&\hspace{-0.6cm}\approx\sum_{l\in\bZ}\int_{t_n}^{t_{n+1}}\left[\widehat{H}_l(\br(t_n),\bw(t_n))
+\frac{t-t_n}{\Delta t}\left(\widehat{H}_l(\br(t_n),\bw(t_n))-\widehat{H}_l(\br(t_{n-1}),\bw(t_{n-1}))\right)\right]
\fe^{ilt/\eps}dt.
\end{align*}
Therefore, for $n\geq1$,
 \begin{align*}\bw(t_{n+1})
\approx& \; \bw(t_n)+\sum_{l\in\bZ}\fe^{ilt_n/\eps}\left[\alpha_l\widehat{H}_l(\br(t_n),\bw(t_n))
+\frac{\beta_l}{\Delta t}\left(\widehat{H}_l(\br(t_n),\bw(t_n))-\widehat{H}_l(\br(t_{n-1}),\bw(t_{n-1}))\right)\right]\\
&-\Theta(t_{n+1}/\eps,\br(t_{n+1}))+
\Theta(t_{n}/\eps,\br(t_{n})),
\end{align*}
and as for $n=0$,
\begin{align*}\bw(t_{1})
\approx& \; \bw(0)+\sum_{l\in\bZ} \alpha_l\widehat{H}_l(\br(0),\bw(0))-\Theta(t_{1}/\eps,\br(t_{1}))+
\Theta(0,\br(0)),
\end{align*}
where \begin{equation*}\begin{split}&\alpha_l=\int_0^{\Delta t}\fe^{ilt/\eps}dt={\left\{\begin{split}
&\frac{i\eps}{l}\left(1-\fe^{\frac{il\Delta t}{\eps}}\right),\quad l\neq0,\\
&\Delta t,\qquad\qquad\qquad\,  l=0,\end{split}\right.},\\
&\beta_l=\int_0^{\Delta t}t\fe^{ilt/\eps}dt=
{\left\{\begin{split}
&\frac{\eps}{l^2}\left((\eps-il\Delta t)\fe^{\frac{il\Delta t}{\eps}}
-\eps\right),\quad l\neq0,\\
&\frac{\Delta t^2}{2},\qquad\qquad\qquad\qquad\quad\ \ l=0.\end{split}\right.}
\end{split}\end{equation*}
In total, the detailed exponential integration scheme based on the micro-macro method reads:
\begin{subequations}\label{MM scheme}
\begin{align}
\bu^{n+1}=&\Theta(t_{n+1}/\eps,\br^{n+1})+\bw^{n+1},\quad n\geq0,\\
\br^{n+1}=&\br^{n-1}+2\Delta t\Pi F\left(\cdot,\Theta(\cdot,\br^n)\right),\quad n\geq1,\\
\bw^{n+1}
=&\bw^n+\sum_{l=-N_\tau/2}^{N_\tau/2-1}\fe^{ilt_n/\eps}\left[\alpha_l\widehat{H}_l(\br^n,\bw^n)
+\frac{\beta_l}{\Delta t}\left(\widehat{H}_l(\br^n,\bw^n)-\widehat{H}_l(\br^{n-1},\bw^{n-1})\right)\right]\\
&-\Theta(t_{n+1}/\eps,\br^{n+1})+
\Theta(t_{n}/\eps,\br^n),\quad n\geq1,\nonumber\\
\br^{1}=&\br^0+\Delta t \Pi F\left(\cdot,\Theta(\cdot,\br^0)\right),\quad
\bw^1=\bw^0+\sum_{l=-N_\tau/2}^{N_\tau/2-1} \alpha_l\widehat{H}_l(\br^0,\bw^n)-\Theta(t_{1}/\eps,\br^1)+
\Theta(0,\br^0),\\
\br^0=&\bu(0)-\eps AF(\tau,\bu(0))\vert_{\tau=0},\quad
\bw^0=\eps A\left[F(\tau,\bu(0))-F(\tau,\br(0))\right]\vert_{\tau=0},
\end{align}
\end{subequations}
where $N_\tau$ is an even integer to truncate the Fourier series. Suppose the numerical solution of MM is obtained as
$\bu^n=\binom{\bx^n}{\by^n}$, then the numerical velocity of (\ref{charact}) at $t_n$ is given as
\begin{align*}
&\bv^n=\cos(t_n/\eps)\by^n
+(1-\cos(t_n/\eps))(\bB(\bx^n)\cdot \by^n)\bB(\bx^n)
+\sin(t_n/\eps)\by^n\times\bB(\bx^n).
\end{align*}

The micro-macro (MM) scheme (\ref{MM scheme}) is uniformly second order accurate. In practical programming, one only needs a subroutine to evaluate $F(\tau,\bu)$. When the electric and magnetic field $\bE$ and $\bB$ in particle system (\ref{charact}) are given external functions such as polynomials, the dependence of the fast time scale $t/\eps$ (or $\tau$) in $F$ and $\Theta$ can be found out explicitly and the averaging with respect to $\tau$
(through the operator $\Pi$) in the MM scheme can be pre-computed exactly.
Then the MM method will have a discretisation error in time of $O(\Delta t^2)$ with
optimal computational cost $O(\Delta t^{-1})$. In case that the exact evaluation of $t/\eps$ is impossible or too costly, one can always perform those computations of the fast time scale with the additional variable $\tau$ by FFT with uniform spectral accuracy thanks to the periodicity. In such case, the error bound of MM is
$$O(\Delta t^2+N_\tau^{-m_0}),$$
and the total cost is $O(\Delta t^{-1}N_\tau\log N_\tau)$, which are the same as TSF.

\textbf{Full recovery of oscillation.}
Since MM method finds out the dependence of the fast scale in a rather explicit way, it can easily recover the complete gyro-motion of the particles, i.e. the full oscillatory trajectory of the solution of (\ref{charact}), by interpolating respectively the macro part and micro part.

Let $\br^{n}$ and $\bw^n$ be the numerical solutions obtained from MM under a step size $\Delta t>0$. For an arbitrary $t>0$, if $t_n<t<t_{n+1}$, then we can use the linear interpolation to get
$$\br_I^n(t)=\frac{t_{n+1}-t}{\Delta t}\br^n+\frac{t-t_{n}}{\Delta t}\br^{n+1},\quad \bw_I^n(t)=\frac{t_{n+1}-t}{\Delta t}\bw^n+\frac{t-t_{n}}{\Delta t}\bw^{n+1}.$$
Noting that $\br(t)$ is the averaged part and $\bw(t)$ satisfies $\partial_t^2\bw(t)=O(1)$, together with the accuracy order of $\br^{n}$ and $\bw^n$ from the MM scheme, it is clear that the above linear interpolation gives uniform second accuracy for approximating $\br(t)$ and $\bw(t)$.
Then with micro-macro decomposition (\ref{MM}), we get the interpolated numerical solution of (\ref{ode}) as
\begin{equation}\label{full recover}
\bu_I^{n}(t)=\Theta(t/\eps,\br_I^{n}(t))+\bw_I^{n}(t),\quad t_n\leq t\leq t_{n+1},
\end{equation}
which fully recovers the oscillation information with ease. It is direct to see
$$|\bu(t)-\bu_I^{n}(t)|=O(\Delta t^2+N_\tau^{-m_0}). $$

\textbf{Restart strategy.}
In practical long time computing, we observe that the MM scheme (\ref{MM scheme}) could have numerical instability issue. The instability is developed from the micro part (\ref{micro}) in MM decomposition (\ref{MM}) as time evolves, since $\bw(t)=O(\eps)$ does not hold for arbitrary long time in general. Here, we propose a restart strategy to improve its long time performance.

Choose $T_0>0$ as the period to restart the MM decomposition. For some $m\in\bN$, we consider the oscillatory problem (\ref{ode}) for $\bu^m(t)=\bu(mT_0+t)$ as
$$\dot{\bu}^{m}(t)=F(mT_0/\eps+t/\eps,\bu^m(t)),\quad 0<t\leq T_0.$$
Then we apply the proposed MM strategy on the above, which leads to MM decomposition as
\begin{equation}\label{MM re}
\bu^m(t)=\Theta(mT_0/\eps+t/\eps,\br(t))+\bw(t),\quad 0\leq t\leq T_0,
\end{equation}
with
\begin{numcases}
\,\dot{\br}(t)=\Pi F\left(mT_0/\eps+\cdot,\Theta(mT_0/\eps+\cdot,\br(t))\right),\quad 0< t\leq T_0,\nonumber\\
\br(0)=\bu^m(0)-\eps AF(mT_0/\eps+\tau,\bu^m(0))\vert_{\tau=0},\nonumber
\end{numcases}
and
\begin{numcases}
\,\dot{\bw}(t)=G(mT_0/\eps+ t/\eps,\br(t),\bw(t)),\quad 0< t\leq T_0,\nonumber \\
\bw(0)= \eps A\left[F(mT_0/\eps+\tau,\bu^m(0))-F(mT_0/\eps+\tau,\br(0))\right]\vert_{\tau=0}.\nonumber
\end{numcases}
The integration scheme (\ref{MM scheme}) is then applied to solve the above two systems.

As can be seen in the numerical results later, this restart strategy for solving (\ref{charact}) is stable in long time computing. Its accuracy and computational cost are essentially the same as the direct scheme without restart.

\begin{remark}
  In the case that
   $\bB(\bx)=\bB_0(\bx)+O(\eps)$ with $|\bB_0(\bx)|\equiv const$,
   all the proposed algorithms in this section can be extended to such case without any essential difficulties.
\end{remark}

\begin{remark}
  Although physically the magnetic field should be divergence free, i.e. $\nabla_\bx\cdot\bB=0$,  all the algorithms we proposed in this section do not rely on the divergence free property of $\bB(\bx)$ to offer the uniform accuracy.
\end{remark}

\section{Extension to varying intensity magnetic field}\label{sec:ex}
In this section, we extend previous methods to the case of Vlasov equation with a general magnetic field (whose intensity may vary), i.e.
$$|\bB(\bx)|=b(\bx)\neq const,\quad \bx\in\bR^3. $$
We start by commenting on the difficulties to be encountered in this situation.  As soon as $|\bB(\bx)|=b(\bx)$ varies with $\bx$ while remaining bounded from below by some $c_0$ independent of $\eps$, the characteristic equation for each particle
\begin{align*}
   & \dot{\bx}_k(t)=\bv_k(t), \\
  &\dot{\bv}_k(t)=\bE(t,\bx_k(t))+\frac{1}{\eps}\bv_k(t)\times \bB(\bx_k(t)),\quad t>0,
   \end{align*}
generate  high oscillations. However, the dynamics of the linear part of the equation is non-periodic and thus does not allow for the application of averaging techniques. A possible remedy consists in time-reparametrisation. However, another difficulty then arises from the Poisson equation itself
  $$\nabla_\bx\cdot\bE(t,\bx)=\sum_{k=1}^{N_p}w_k\delta(\bx-\bx_k(t)),$$
 which couples  a huge number ($N_p\gg1$) of particles with different frequencies.
 \vskip1ex
 {\textbf{Rescaling the  time for each particle}.}
  Each particle has a periodic oscillation with respect to its own time $s_k=s_k(t)$, given by
\begin{equation}
\label{new time}\dot{s}_k(t)=b(\bx_k(t)),\quad s_k(0)=0.
\end{equation}
Note that $s_k(t)$ is strictly increasing and that
$$s_k(t)\to\infty,\quad \mbox{as}\quad t\to\infty,$$
 since $b(\bx) \geq c_0 >0$.
Denoting $\tilde{\bx}_k(s_k):=\bx_k(t),\  \tilde{\bv}_k(s_k):=\bv_k(t)$, we indeed have
\begin{numcases}
  \,\frac{\mathrm{d}}{\mathrm{d}s_k}\tilde{\bx}_k(s_k)=\frac{\tilde{\bv}_k(s_k)}{b(\tilde{\bx}_k(s_k))},\nonumber\\
  \frac{\mathrm{d}}{\mathrm{d}s_k}\tilde{\bv}_k(s_k)=\frac{\bE(t(s_k),\tilde{\bx}_k(s_k))}{b(\tilde{\bx}_k(s_k))}
  +\frac{1}{\eps}\tilde{\bv}_k(s_k)\times\frac{\bB(\tilde{\bx}_k(s_k))}{b(\tilde{\bx}_k(s_k))},
  \quad s_k>0,\label{s}\\
  \tilde{\bx}_k(0)=\bx_{k,0},\quad \tilde{\bv}_k(0)=\bv_{k,0},\nonumber
  \end{numcases}
  where the intensity of the magnetic field is scaled to one. Assuming that the electric field $\bE(t,\bx)$ is a given external field with no $\eps$-dependent oscillation in $t$, then the particle system (\ref{s}) is decoupled for each $k$.
  Therefore, the numerical methods introduced in the previous section can all be applied to  (\ref{s}) for each particle in its own  time $s_k$ with uniform accuracy.

  In order to build up an approximation of the  function $f^\eps(t,\bx,\bv)$ through (\ref{dirac}), it is then necessary to re-synchronise  for all particles. However, reverting $s_k$ to the physical time $t$ is not straightforward, as it requires to numerically solve the nonlinear equation
  (\ref{new time}) or its equivalent for the inverse map
  \begin{equation}\label{inversion}
  \dot{t}(s_k)=1/b(\tilde{\bx}_k(s_k)),\quad t(0)=0.
  \end{equation}
Given a physical time $t=T>0$ (or conversely $S_k>0$), the best we can hope for is to determine $s_k(T)$ (or conversely $t(S_k)$) up to an error of size $O(\Delta t^p)$ if a $p$th-order numerical method is applied. This source of possible error  needs to be properly controlled. Here we illustrate how it can be done for the micro-macro (MM) method.

{\textbf{Interpolating to synchronise}.} For the sake of brevity, we omit $k$ and denote
$$\tilde{\bB}(\tilde{\bx})=\frac{\bB(\tilde{\bx})}{b(\tilde{\bx})},\quad
\tilde{\bE}(t,\tilde{\bx})=\frac{\bE(t,\tilde{\bx})}{b(\tilde{\bx})}.
$$
We filter (\ref{s}) and (\ref{inversion}) as before
by introducing
\begin{equation}\label{filters}
\tilde{\by}(s):=\cos(s/\eps)\tilde{\bv}(s)+(1-\cos(s/\eps))(\tilde{\bB}(\tilde{\bx}(s))
\cdot\tilde{\bv}(s))
\tilde{\bB}(\tilde{\bx}(s))-\sin(s/\eps)\tilde{\bv}(s)\times\tilde{\bB}(\tilde{\bx}(s)),
\end{equation}
 and obtain
 \begin{numcases}
  \,\frac{\mathrm{d}}{\mathrm{d}s}\tilde{\bx}(s)=\tilde{F}_\bx(s/\eps,\tilde{\bx}(s),
  \tilde{\by}(s)),\nonumber\\
  \frac{\mathrm{d}}{\mathrm{d}s}\tilde{\by}(s)=\tilde{F}_\by(s/\eps,\tilde{\bx}(s),
  \tilde{\by}(s)),
  \quad s>0,\label{timere}\\
  \frac{\mathrm{d}}{\mathrm{d}s}t(s)=\frac{1}{b(\tilde{\bx}(s))},\nonumber\\
  \tilde{\bx}(0)=\bx_{0},\quad \tilde{\by}(0)=\bv_{0},\quad t(0)=0,\nonumber
  \end{numcases}
which  has now the appropriate format  (\ref{ode}). Here $\tilde{F}_\by$ is defined similarly as in \eqref{filter charact} (see Section \ref{sec:2scale})  with the scaled vector fields $\tilde{\bE}$, $\tilde{\bB}$ and $\tilde{F}_\bx=F_\bx/b(\tilde{\bx})$. We then solve system (\ref{timere}) with the MM scheme (\ref{MM scheme}) with time step $\Delta s>0$ and denote $t^n \approx t(s_n)$ the numerical solution of $t(s_n)$ at $s_n=n\Delta s$.
Then, using the notations $\br:=(\br_\bx, \br_\by)$ and $\bw=(\bw_\bx, \bw_\by)$
for the macro and micro parts (see Section \ref{sec:mm}), the numerical solution of $\tilde{\bx}$ and $\tilde{\by}$ at $s_n=n\Delta s$
is 
 $$
 \tilde{\bx}^n:=\br_{\bx}^n+\eps A\tilde{F}_{\bx}(s_n/\eps,\br_{\bx}^n,\br_{\by}^n)+\bw_{\bx}^n\approx\tilde{\bx}(s_n),\quad
 \tilde{\by}^n:=\br_{\by}^n
 +\eps A\tilde{F}_{\by}(s_n/\eps,\br_{\bx}^n,\br_{\by}^n)+\bw_{\by}^n\approx
 \tilde{\by}(s_n).$$
 Assume that $b(\cdot)\in C^1(\bR^3)$ and $0<c_0\leq b(\bx)\leq C_b$ for all $\bx\in\bR^3$ for some $C_b>0$. Then, from
 \begin{align*}
  t^{n}&\geq t(s_n)-|t^n-t(s_n)|\geq \frac{s_n}{C_b}-C\Delta s^2
 \end{align*}
we see that whenever $\Delta s>0$  is small enough, the value of $t^n$ will eventually become greater than any arbitrary positive value.
For a given  final time $T>0$, we thus stop the algorithm when
$t^n\leq T\leq t^{n+1}$. Note that the function $t(s)$ satisfies
$$
\frac{\mathrm{d}^2}{\mathrm{d}s^2}t(s)=O(1),\quad 0<\eps\leq1,
$$
so that we can interpolate the value of $t(s)$ from $t^n$ and $t^{n+1}$ with second order uniform accuracy:
$$\theta:=\frac{T-t^{n+1}}{t^n-t^{n+1}},\quad T=\theta t^n+(1-\theta)t^{n+1},\quad
s^*=\theta s^n+(1-\theta)s^{n+1}.$$
Interpolation is further used to obtain
\begin{align*}
&
\br_{\bx}^*=\theta \br_{\bx}^n+(1-\theta)\br_{\bx}^{n+1},\quad \br_{\by}^*=\theta \br_{\by}^n+(1-\theta)\br_{\by}^{n+1},\\
&\bw_{\bx}^*=\theta \bw_{\bx}^n+(1-\theta)\bw_{\bx}^{n+1},\quad \bw_{\by}^*=\theta \bw_{\by}^n+(1-\theta)\bw_{\by}^{n+1}.
\end{align*}
As stated in Section \ref{sec:mm}, all functions used above in the interpolation  have uniformly bounded second order derivative. As a consequence, the so-obtained approximations are uniformly second order. Eventually, the  numerical solutions of (\ref{timere}) at $s=s(T)$ are given by
$$\tilde{\bx}(s(T))\approx \tilde{\bx}^*,\quad \tilde{\by}(s(T))\approx \tilde{\by}^*,$$
with
\begin{equation}\label{inter}\tilde{\bx}^*:=\br_{\bx}^*
 +\eps A\tilde{F}_{\bx}(s^*/\eps,\br_{\bx}^*,\br_{\by}^*)+\bw_{\bx}^*,\quad
 \tilde{\by}^*:=\br_{\by}^*
 +\eps A\tilde{F}_{\by}(s^*/\eps,\br_{\bx}^*,\br_{\by}^*)+\bw_{\by}^*.
 \end{equation}
 Note that the dependence in the  fast-time $s/\eps$ within the MM method only appears in the $O(\eps)$-terms. As a consequence, the approximation errors of $\tilde{\bx}(s(T))$ and $\tilde{\by}(s(T))$ by (\ref{inter}) are still of uniform second order, although an error is introduced on $s^*/\eps$ owing to $|s(T)-s^*|/\eps=O(\Delta s^2/\eps)$.

To reconstruct an approximation of the distribution function $f^\eps(T,\bx,\bv)$, we need $\bx(T)$ and $\bv(T)$. For the position variable, we directly have $\bx(T)=\tilde{\bx}(s(T))$ due to the definition. As for the velocity variable $\bv(T)$, we need to invert the change of variable (\ref{filters}), where the fast scale $s/\eps$ occurs in some $O(1)$-terms. However, the parallel component
$\bv_{\parallel}:=(\bB\cdot \bv )\bB/\|\bB\|^2$ of the velocity  as well as
$|\bv|$ do not suffer from the same problem, thanks to the following observations
(let us recall that $\| \tilde{\bB}(\tilde{\bx}(s)) \|^2=1$)
$$
\tilde{\bv}_\parallel(s)=(\tilde{\bB}(\tilde{\bx}(s))\cdot\tilde{\by}(s))\tilde{\bB}(\tilde{\bx}(s)),\quad |\tilde{\bv}(s)|=|\tilde{\by}(s)|,
$$
which allow to get
$$\bv_\parallel(T)\approx(\tilde{\bB}(\tilde{\bx}(s(T)))\cdot\tilde{\by}(s(T)))
\tilde{\bB}(\tilde{\bx}(s(T))),\quad |\bv(T)|=|\tilde{\by}(s(T))|.$$
Therefore, the strategy proposed in this section is of overall uniform second order for the computation of
$$\bx(t),\quad \bv_\parallel(t),\quad  |\bv(t)|,\qquad t\geq0.$$
This, in turn, allows for a uniformly accurate approximation of macroscopic quantities such as the density or the kinetic energy
$$\rho^\eps(t,\bx):=\int_{\bR^3}f^\eps(t,\bx,\bv)d\bv,\quad \rho^\eps_\bv(t,\bx):=\int_{\bR^3}|\bv|^2f^\eps(t,\bx,\bv)d\bv,$$
as well as the magnetic moment \cite{ Lubich,Northrop}
\begin{equation}\label{magnetic moment PDE}
\mu^\eps(t):=\int_{\bR^3}\int_{\bR^3}f^\eps(t,\bx,\bv)\frac{|\bv_\bot|^2}
{|\bB(\bx)|}d\bx d\bv,
\end{equation}
with $\bv_\bot:=\bv - \bv_\parallel$.


 \section{Numerical results} \label{sect:nr}

This section is devoted to present the numerical results from the proposed numerical schemes.
We shall firstly test and compare the accuracy, efficiency and long time performance of the schemes
considering a single test particle for some three dimensional simulations in the two following cases:
constant intensity and varying intensity magnetic field. Then, we shall focus on the nonlinear Vlasov-Poisson
case under the influence of a constant intensity magnetic field.

\subsection{Accuracy study}
We investigate the performance of the proposed numerical methods by considering
a single particle system in three dimensions:
\begin{align}
   & \dot{\bx}(t)=\bv(t), \nonumber\\
  &\dot{\bv}(t)=\bE(\bx(t))+\frac{1}{\eps}\bv(t)\times \bB(\bx(t)),\quad t>0,\label{particle sysm} \\
   & \bx(0)=\bx_{0},\quad \bv(0)=\bv_{0}.\nonumber
\end{align}
We take two 3D vector fields $\bB(\bx):\bx\in\bR^3\to\bR^3$ and $\bE(\bx)=-\nabla_\bx\phi(\bx)$ with some $\phi(\bx):\bR^3\to\bR$, then (\ref{particle sysm}) is an Hamiltonian system with
the energy conserved as
\begin{equation}\label{eneryg par}
{\mathcal H}_s(t):=\frac{1}{2}|\bv(t)|^2+\phi(\bx(t))= {\mathcal H}_s(0),\qquad t\geq0.
\end{equation}
Note here we do not require $\bB$ being divergence free for these accuracy tests, since all the presented properties of the proposed schemes hold in general as long as $\bB$ is a smooth enough vector field. Hence, we first focus on
a constant intensity magnetic field before considering the general case to test the proposed methods MRC, TSF and MM.

\begin{example}\label{example1}\emph{(Constant intensity). }
We take the two external fields in the system \eqref{particle sysm}
$$\bE(\bx)=\left(\begin{split}
&\cos(x_1/2)\sin(x_2)\sin(x_3)/2\\
&\sin(x_1/2)\cos(x_2)\sin(x_3)\\
&\sin(x_1/2)\sin(x_2)\cos(x_3)
\end{split}\right),\quad
\bB(\bx)=\left(\begin{split}
&\quad\sin(x_1+x_2)\\
&\cos(x_1+x_2)\sin(x_3)\\
&\cos(x_1+x_2)\cos(x_3)
\end{split}\right),\quad \bx=(x_1,x_2,x_3),$$
where $|\bB(\bx)|=1$ and $\bE(\bx)$ derives from the potential $\phi(\bx)$:
$$
\bE(\bx)=-\nabla_\bx\phi(\bx),\quad \phi(\bx)=-\sin(x_1/2)\sin(x_2)\sin(x_3).
$$
We choose the initial data for  (\ref{example1})  as
$$\bx_0=(1/3,-1/2,\sqrt{\pi}/2),\quad \bv_0=(1/2,\fe/4,-1/3).$$
A reference solution is obtained by using the fourth order Runge-Kutta method with small step size $\Delta t=10^{-5}$.

We firstly study the convergence of the three proposed methods (MRC, TSF and MM)
aiming to illustrate their uniform accuracy for all $\eps\in]0,1]$.
To do so, we solve the system under different $\eps$ till $T=\pi/2$ and compute the error
\begin{equation}
\label{def_err}
error=\frac{|\bx(T)-\bx^{num}|}{|\bx(T)|}+\frac{|\bv(T)-\bv^{num}|}{|\bv(T)|},
\end{equation}
where $\bx^{num}$ and $\bv^{num}$ are the numerical values obtained by the different schemes.
For the TSF and MM methods, we define the time step $\Delta t=T/M$ with $M\in\mathbb{N}^\star$
and we fix the grid points for $\tau$-direction as $N_\tau=32$. For MRC, we define the numerical parameters
from a given $M\in\mathbb{N}^\star$ as follows:
$H=\eps M_f/M$ and $h=2\pi/M$ if $M_f/M\geq1$ and $\Delta t=2\pi/M$ if $M_f/M<1$
($M_f$ being defined by \eqref{def_Tf}).

The error (defined by \eqref{def_err}) produced by the three methods at $T=\pi/2$
with respect to the number of (macro) grid points $M$ or with respect to $\eps$ is given in Figure \ref{fig:example1}.
As expected, the three methods enjoy the uniform second order accuracy property
since the rate of convergence is essentially insensitive to the value $\eps\in]0,1]$.
The typical behavior of uniformly accurate methods can be observed on the error as a function of $\eps$:
the curves obtained for different $M$ are almost parallel.
Note that the results obtained by TSF and MM are very close whereas the error produced by MRC becomes smaller when $\eps$ decreases.

In Figure \ref{fig:tau} we look at the error of TSF and MM with respect to the number of grid points $N_\tau$
in the auxiliary variable $\tau$ (the time step is fixed to $\Delta t=10^{-5}$).
This error is important to study since these two methods involve an additional variable $\tau$
which may make them less competitive. We can see  in Figure \ref{fig:tau} that the error decreases spectrally
as the number of grid points $N_\tau$ increases. Moreover, for small values of $\eps$, a very small number of $N_\tau$
is needed to reach high accuracy: $\eps\leq2^{-7}$, $N_\tau=16$ is enough for machine precision. Finally, let us remark
that the results obtained for MM is much less sensitive than TSF: when $\eps=1/2$, $N_\tau=32$ enables to reach machine precision for MM whereas TSF requires $N_\tau=128$.

We now intend to compare the efficiency of TSF, MM and MRC in different regimes ($\eps=1/2$ and $1/2^{14}$).
Let us first fix the numerical parameters. According to the previous comments, in the regime $\eps=1/2$
we take $N_\tau=128$  for TSF and $N_\tau=32$ for MM whereas
in the regime $\eps=1/2^{14}$, we take $N_\tau=8$ for both TSF and MM.
We test the long time behavior of the three methods by investigating the relative error on the numerical total energy defined by
\begin{equation}\label{energy error}
error(t^n)=\frac{|{\mathcal H}_s^n-{\mathcal H}_s(0)|}{|{\mathcal H}_s(0)|},\quad {\mathcal H}_s^n=\frac{1}{2}|\bv^n|^2+\phi(\bx^n),
\end{equation}
where ${\mathcal H}_s^n$ is the numerical approximation of ${\mathcal H}_s(t_n)$ given by \eqref{eneryg par}.
We plot in Figure \ref{fig:com} the error (considering the maximum of \eqref{energy error} among all the iterations)
against the computational time of the three methods for $\eps=1/2$ or $1/2^{14}$ (different time steps have been chosen).
For a given error, when $\eps=1/2$ the MRC method is more efficient than TSF or MM, but it is no longer true when $\eps$
becomes smaller. This is explained by the fact that $N_\tau$ can be chosen smaller in the asymptotic regime, making
TSF and MM more competitive. MRC for $\eps=1/2$ reads as the Strang splitting, while for $\eps=1/2^{14}$ the convergence of MRC becomes first order in terms of total computational cost.
Then, in Figure \ref{fig:energy}, the time history of \eqref{energy error} is plotted for the three methods till $T=32\pi$, for $\eps=1/2^{14}$.
The TSF and MM methods run with $N_\tau=32$ and $M=1024$ ($\Delta t=0.098$) or $M=2048$ ($\Delta t=0.049$).
We report that, for this test, MM becomes unstable in large time so that the restart strategy is used every $T_0=8\pi$.
For MRC, we used $M=64$ or $M=128$.
Figure \ref{fig:energy} clearly shows that MRC has the best long time behavior among the three methods. Indeed, TSF and MM
has a linear drift in the energy error as time evolves whereas for MRC, it remains of the same order (about $10^{-5}$) for large time.
Let us remark that the energy error converges quadratically for the three methods with respect to number of time grid points $M$.

 \begin{figure}[t!]
$$\begin{array}{cc}
\psfig{figure=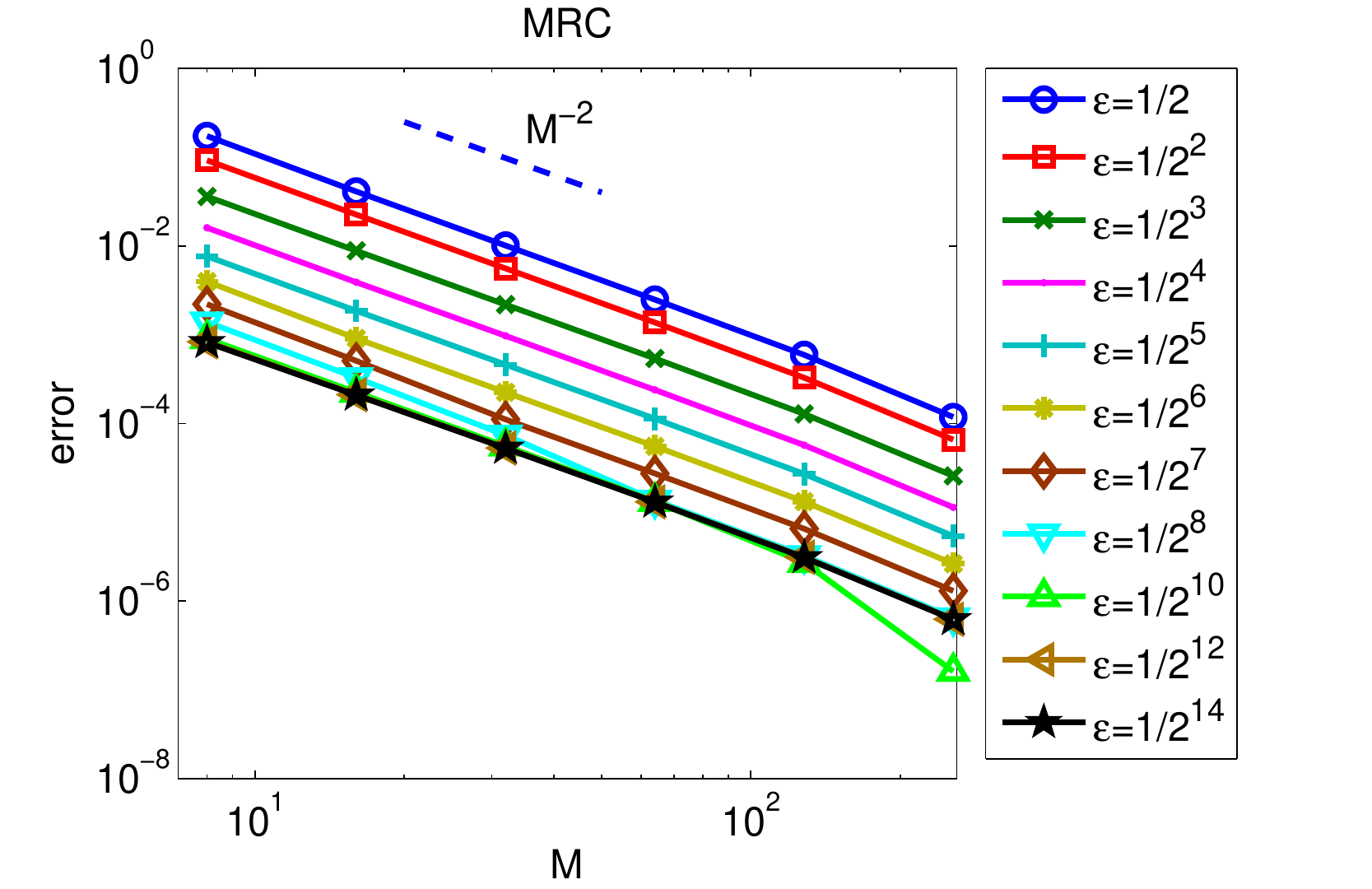,height=5cm,width=7cm}&\psfig{figure=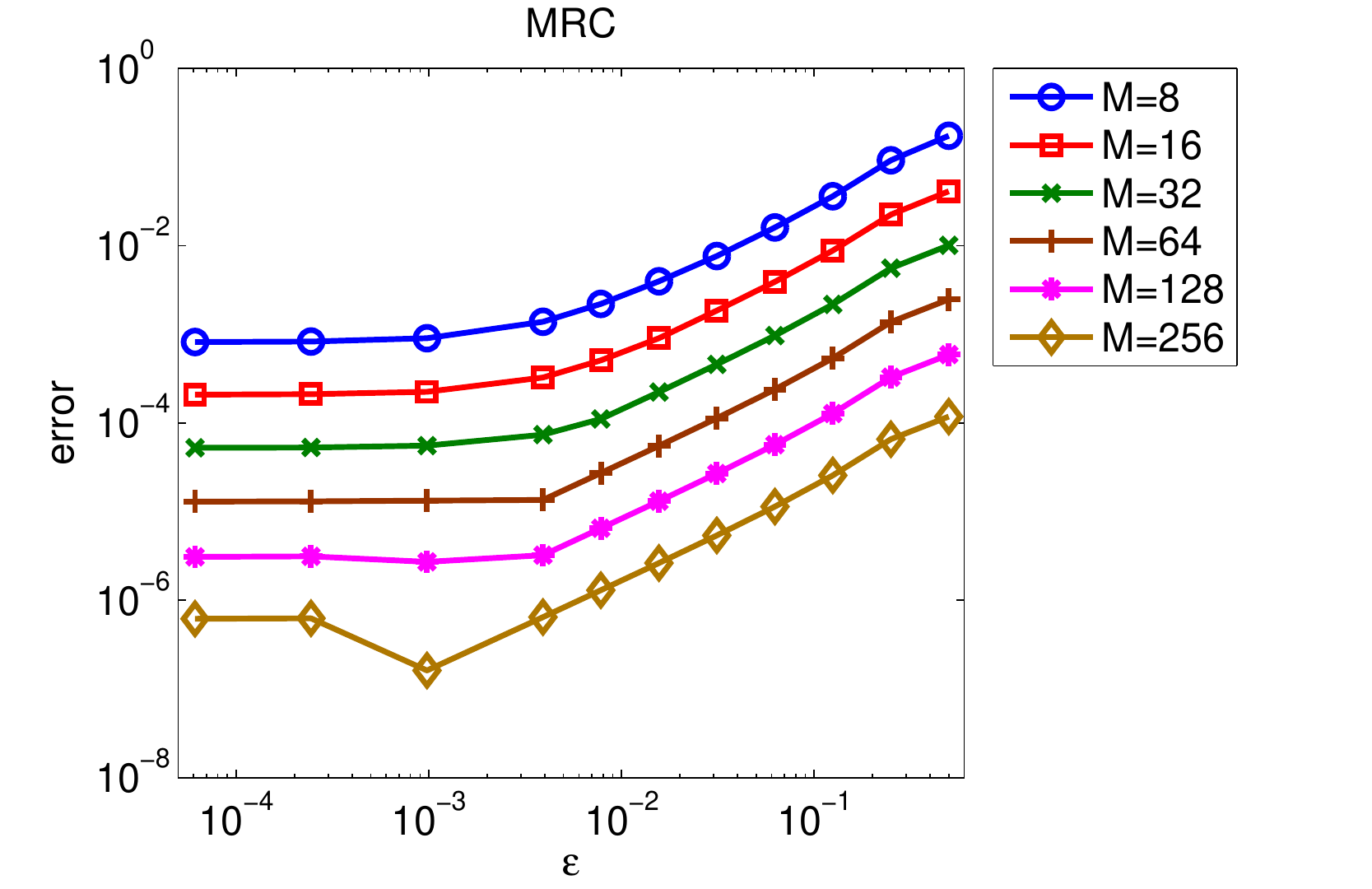,height=5cm,width=7cm}\\
\psfig{figure=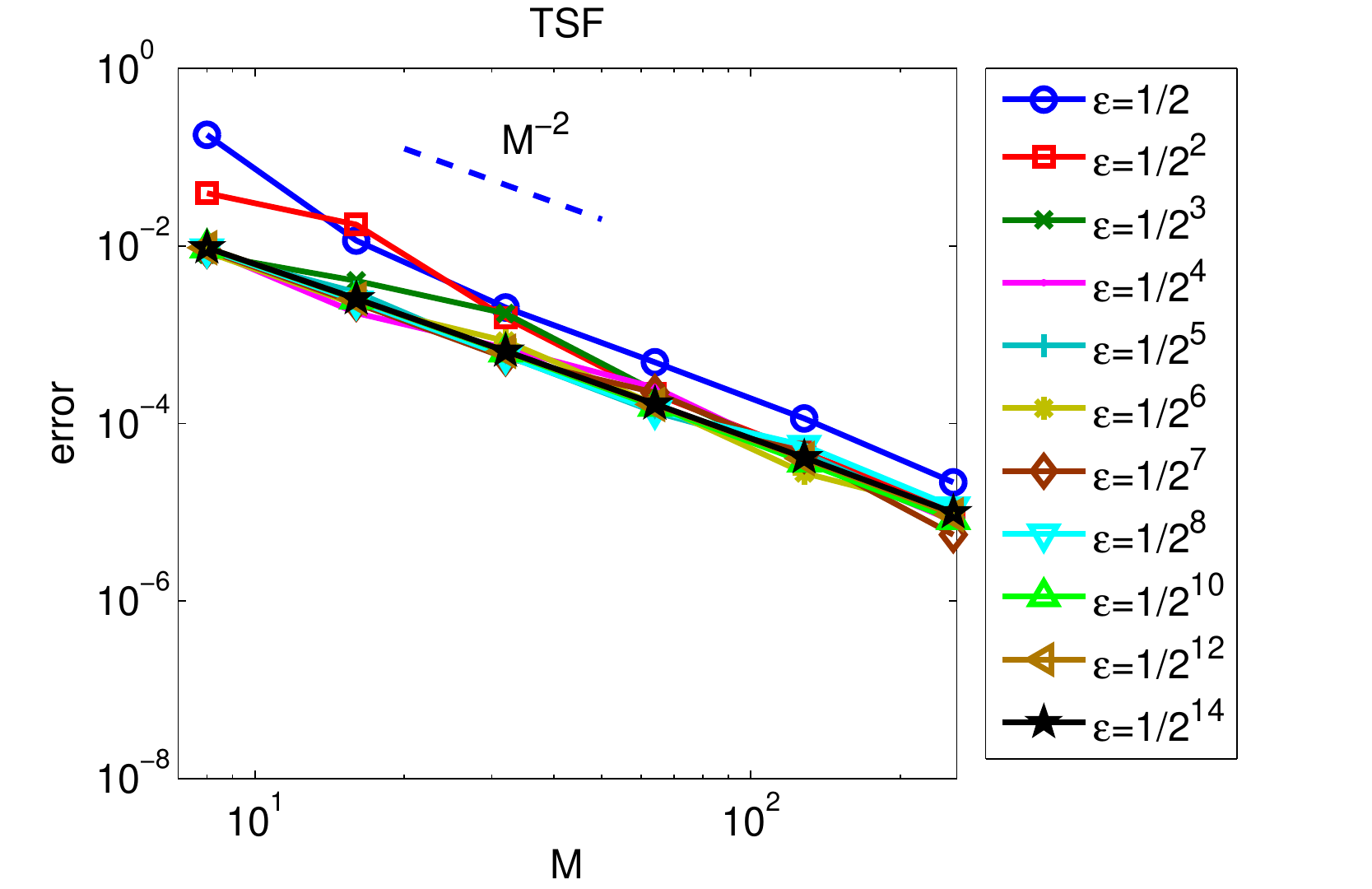,height=5cm,width=7cm}&\psfig{figure=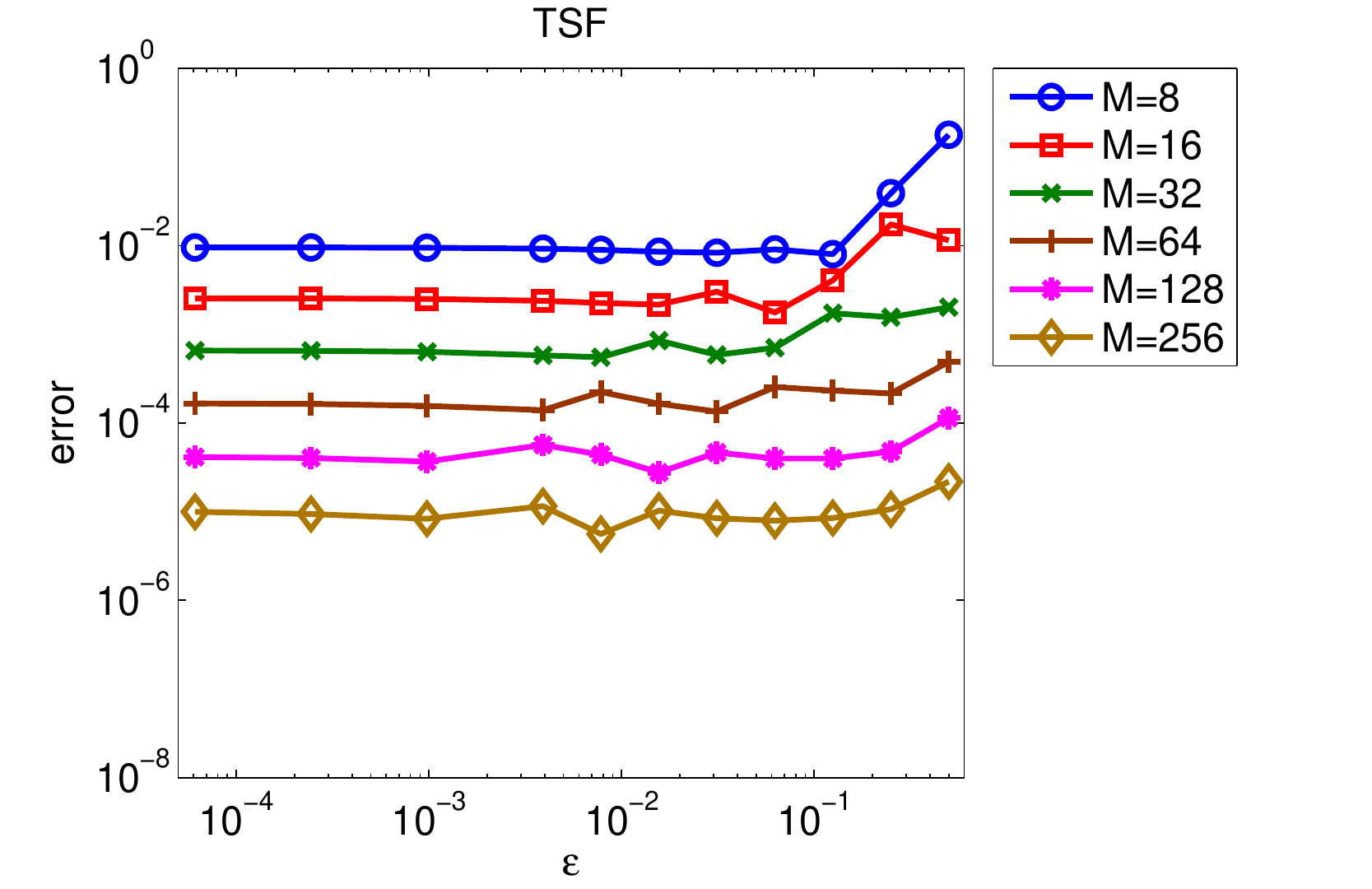,height=5cm,width=7cm}\\
\psfig{figure=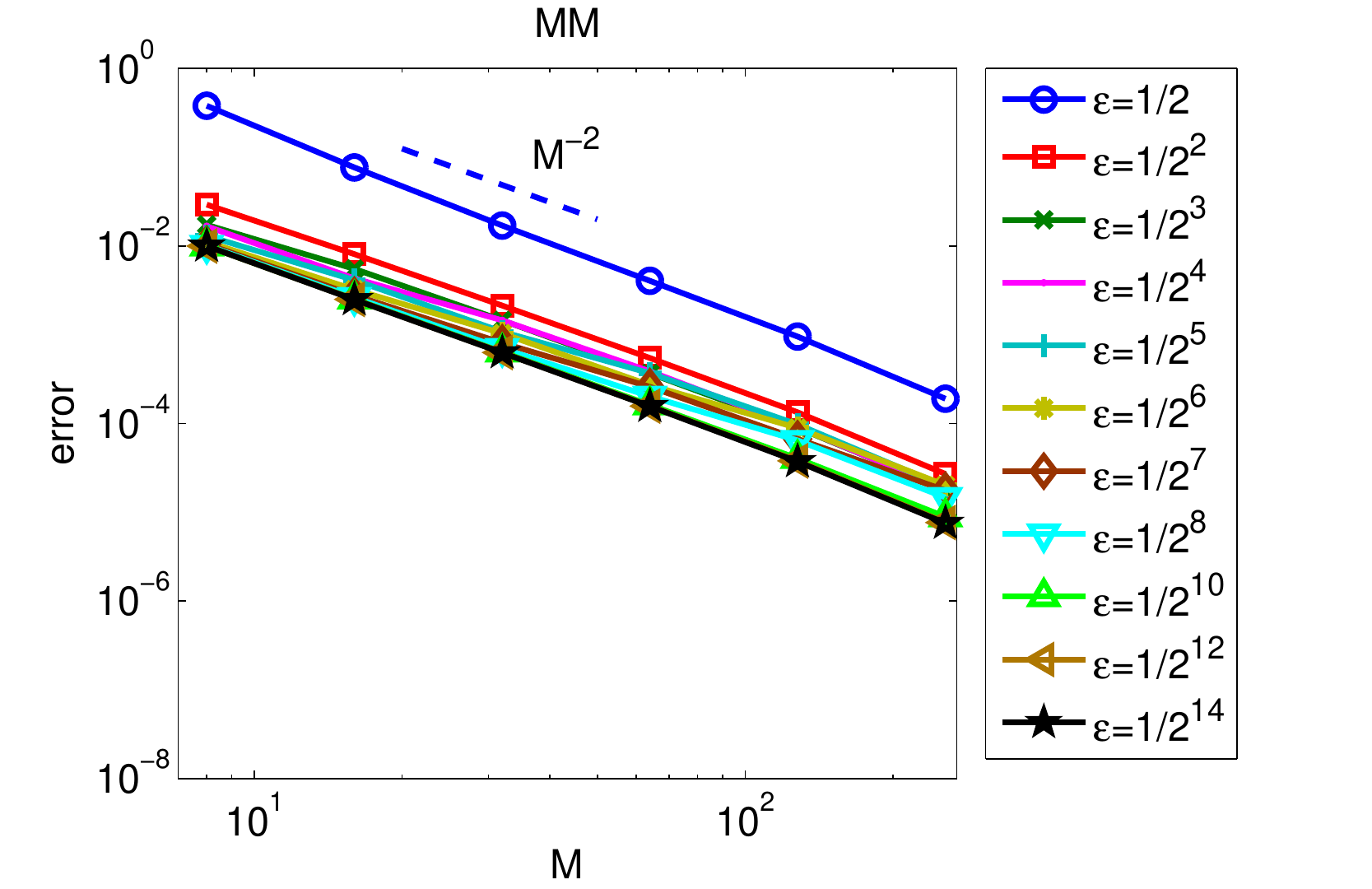,height=5cm,width=7cm}&\psfig{figure=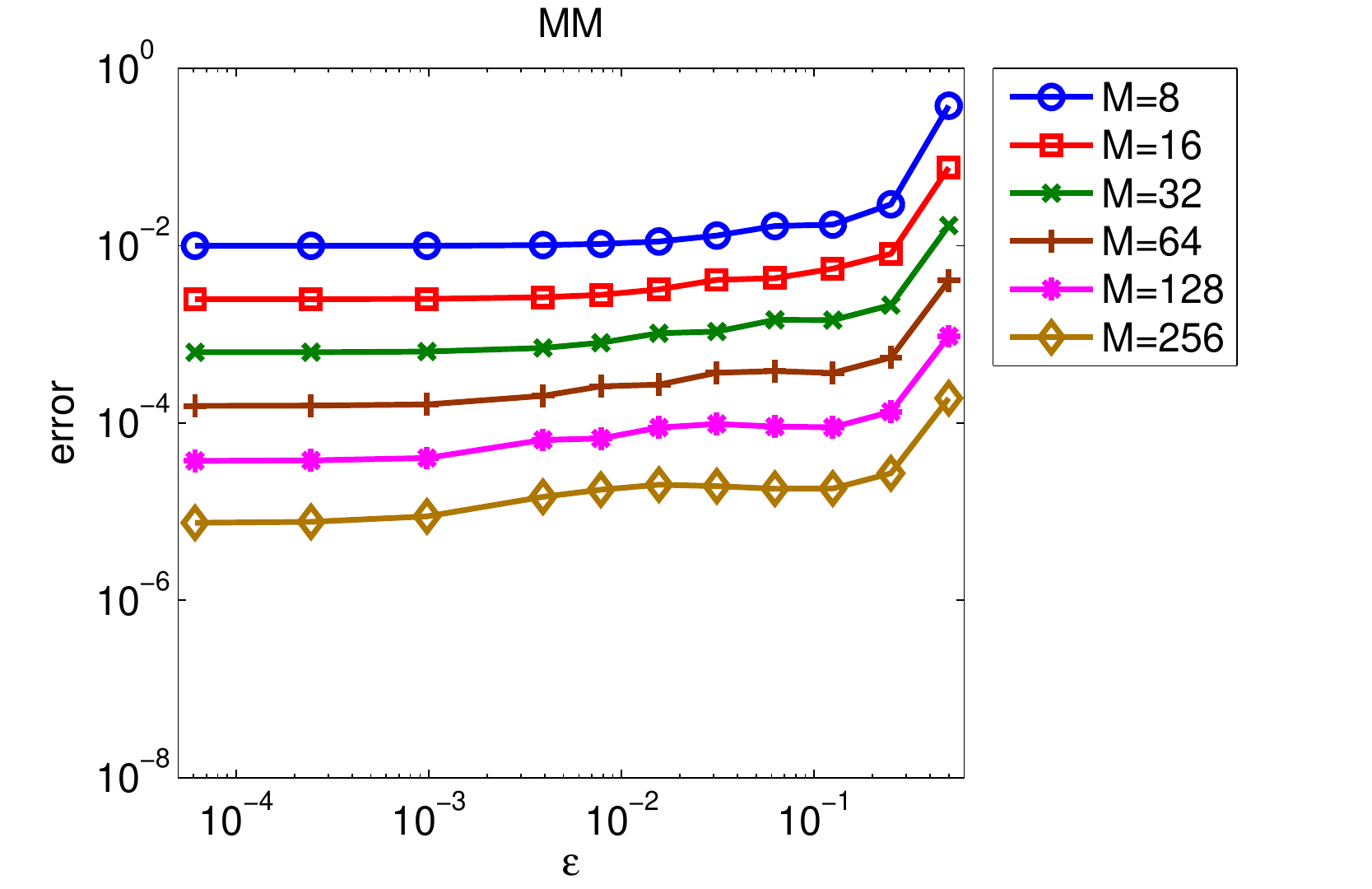,height=5cm,width=7cm}
\end{array}$$
\caption{Errors of MRC, TSF and MM with respect to time steps $M$ under different $\eps$ (left) or with respect to $\eps$ under different $M$ (right) for example \ref{example1}. }\label{fig:example1}
\end{figure}
\begin{figure}[t!]
$$\begin{array}{cc}
\psfig{figure=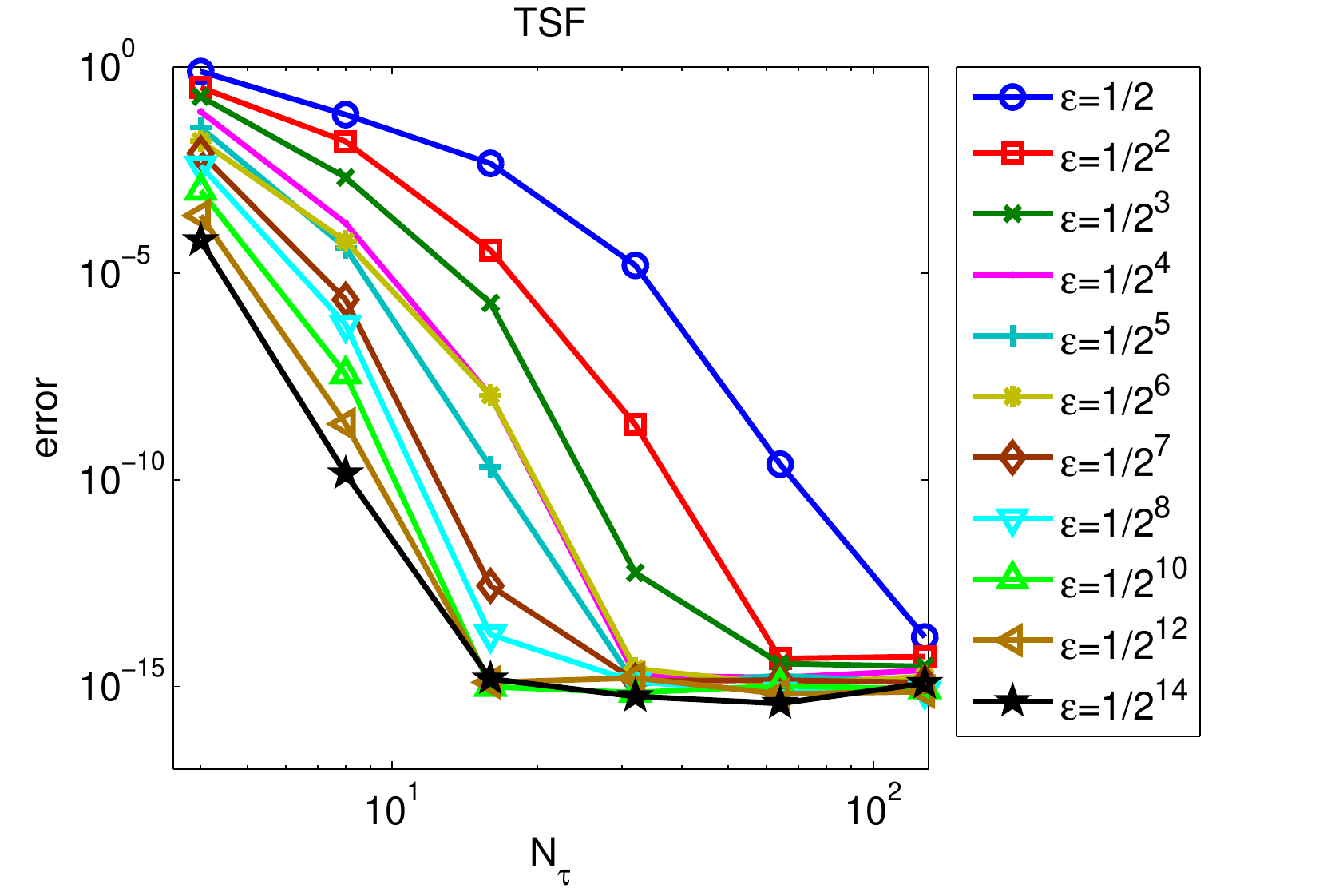,height=5cm,width=7cm}&\psfig{figure=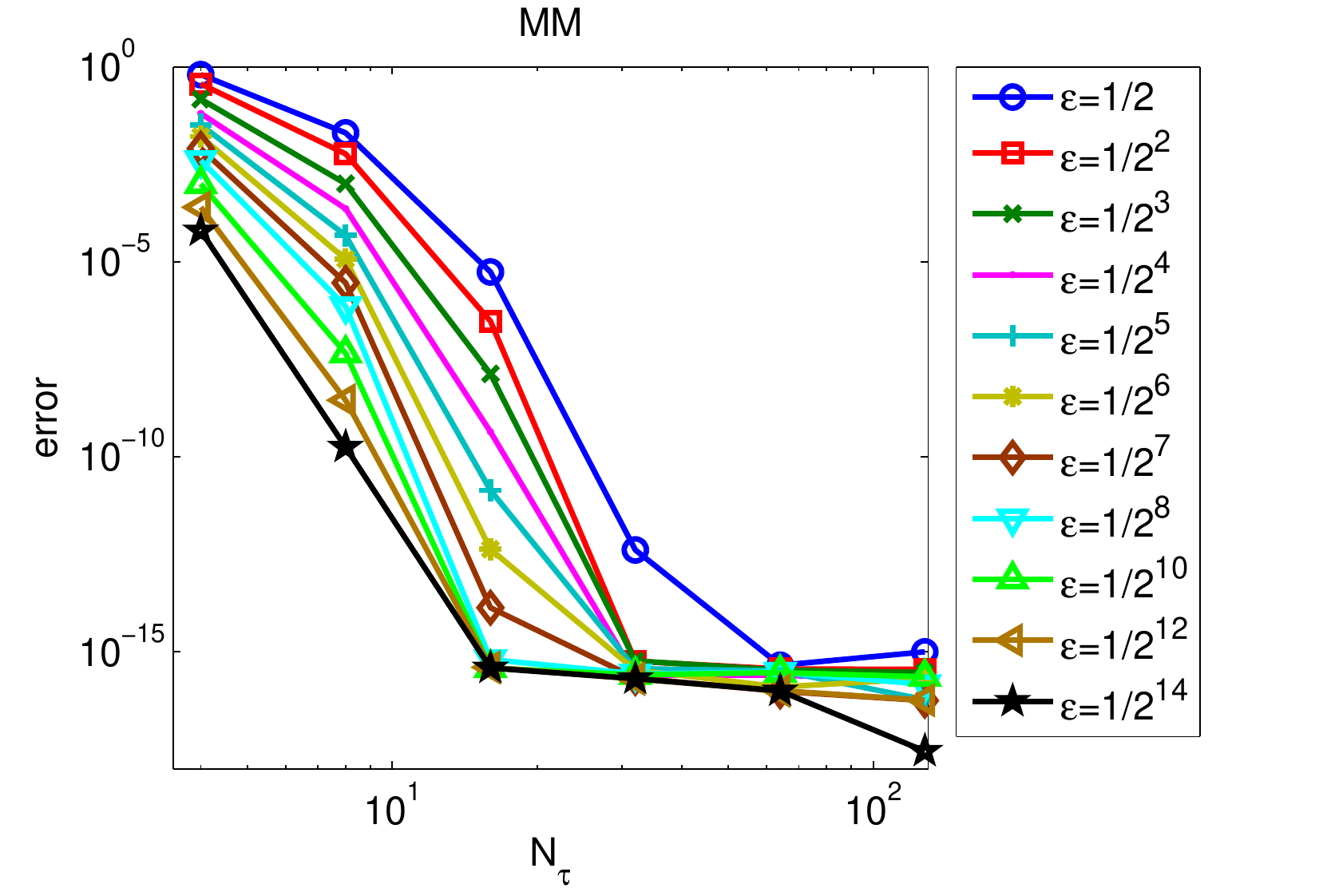,height=5cm,width=7cm}
\end{array}$$
\caption{Error of TSF and MM with respect to $N_\tau$ under different $\eps$ for example \ref{example1}. }\label{fig:tau}
\end{figure}

 \begin{figure}[t!]
$$\begin{array}{cc}
\psfig{figure=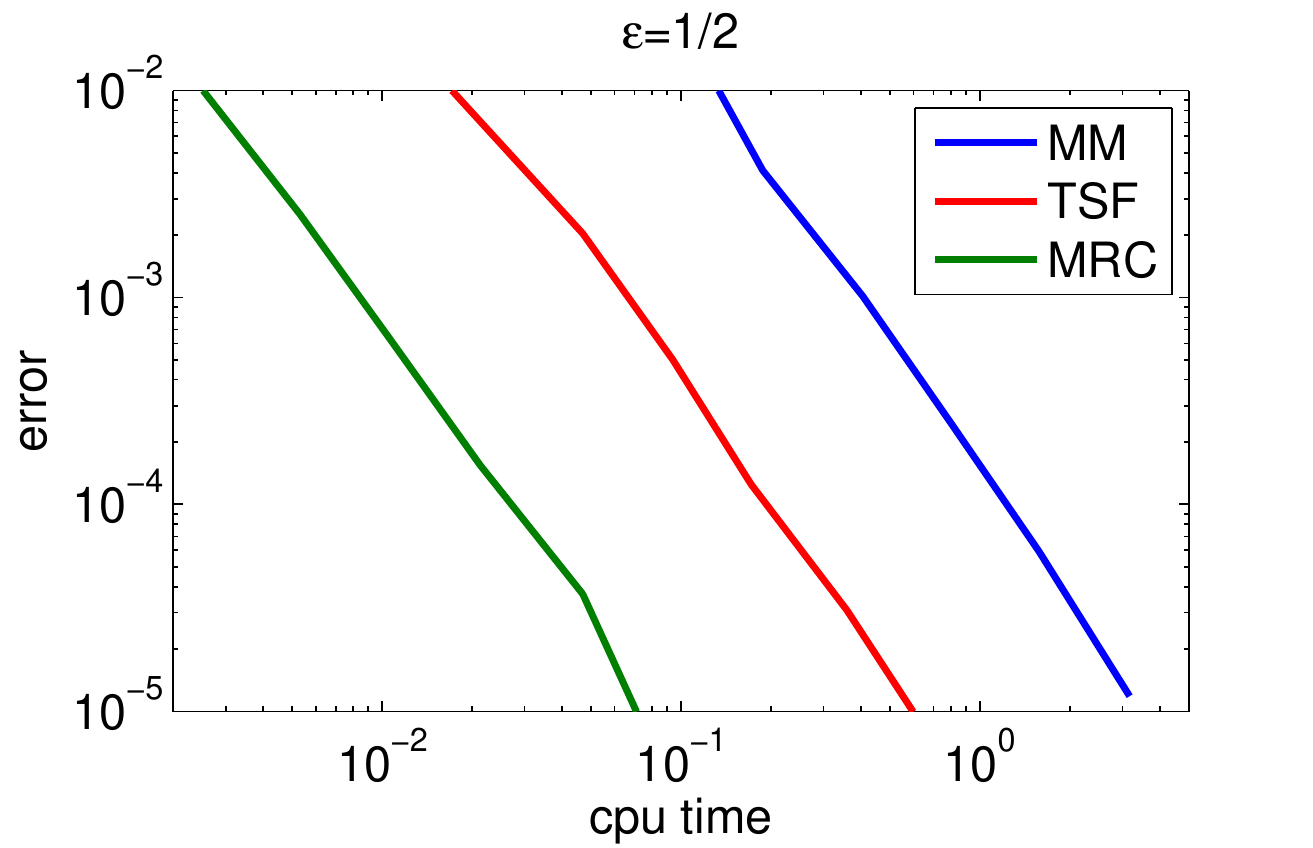,height=5cm,width=7cm}&\psfig{figure=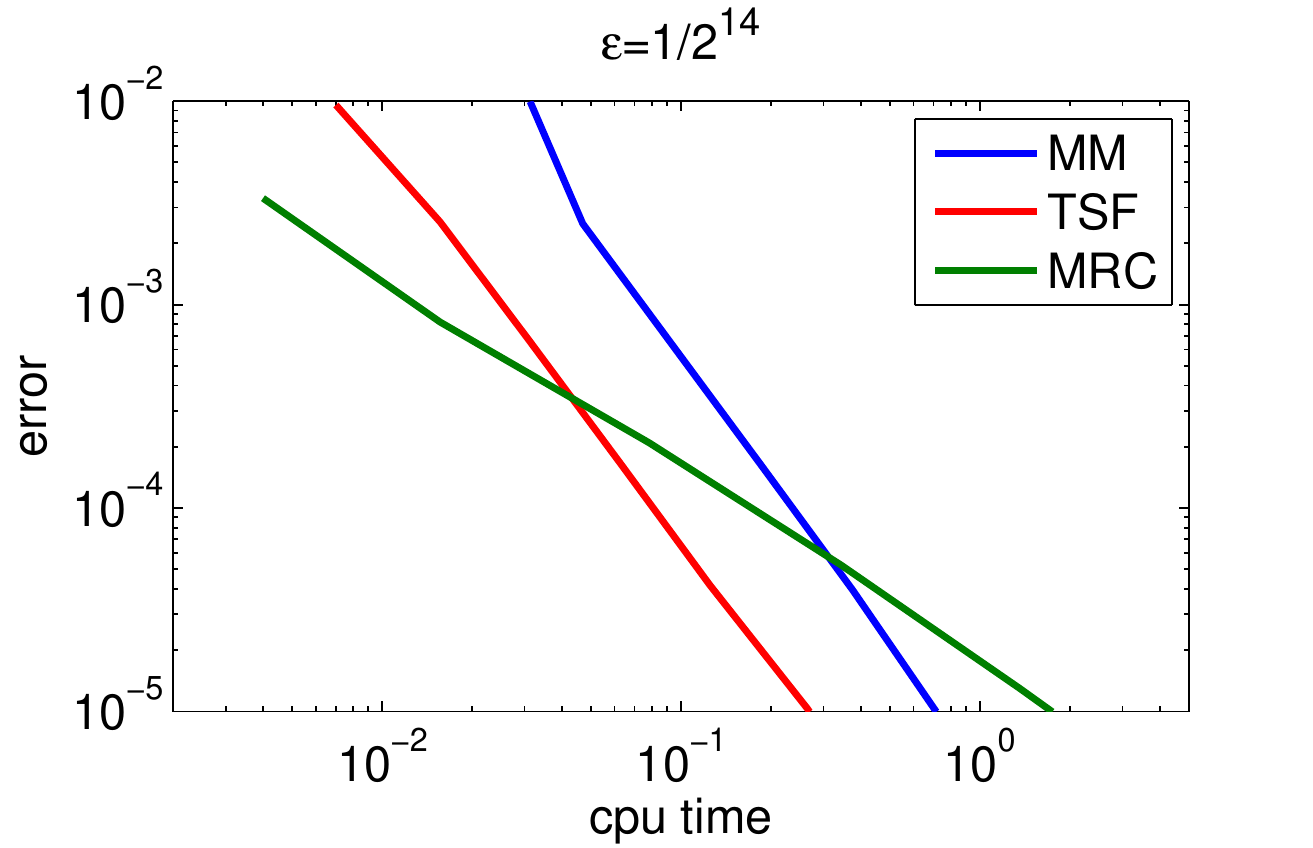,height=5cm,width=7cm}
\end{array}$$
\caption{Efficiency comparison of TSF, MM and MRC in classical (left) or asymptotic regime (right) of $\eps$ for example \ref{example1}: error versus computational time. }\label{fig:com}
\end{figure}

 \begin{figure}[t!]
$$\begin{array}{ccc}
\psfig{figure=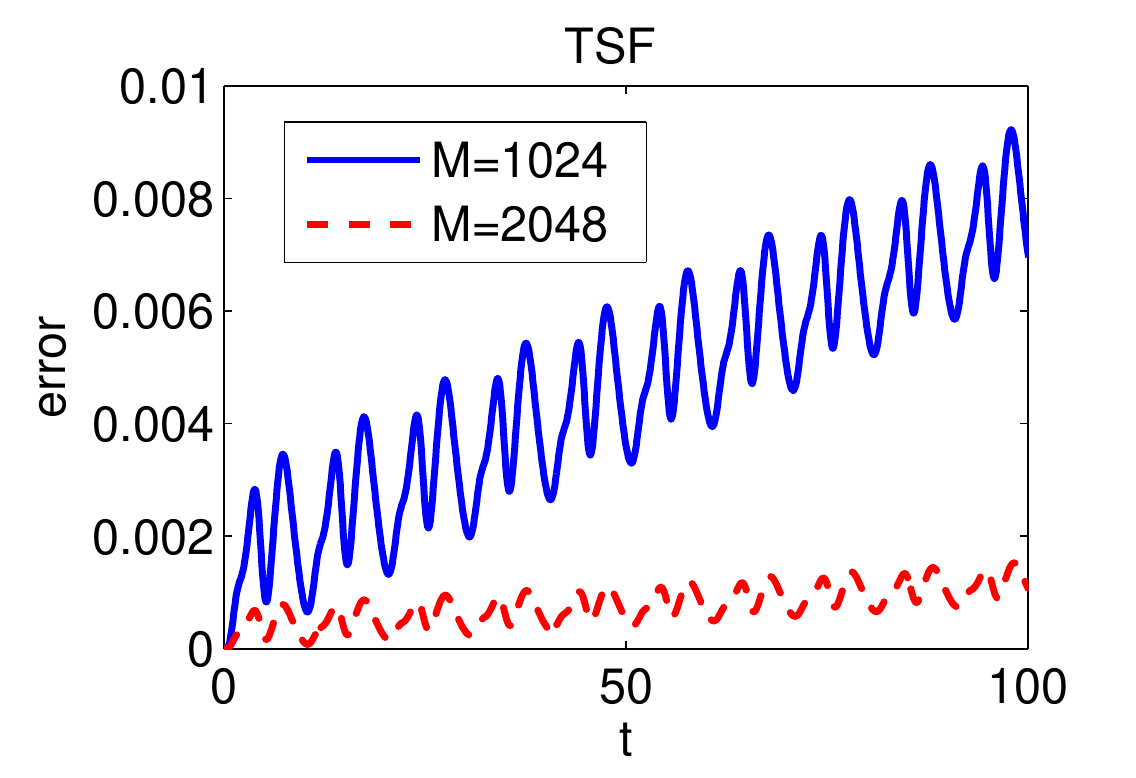,height=5cm,width=4.9cm}&\psfig{figure=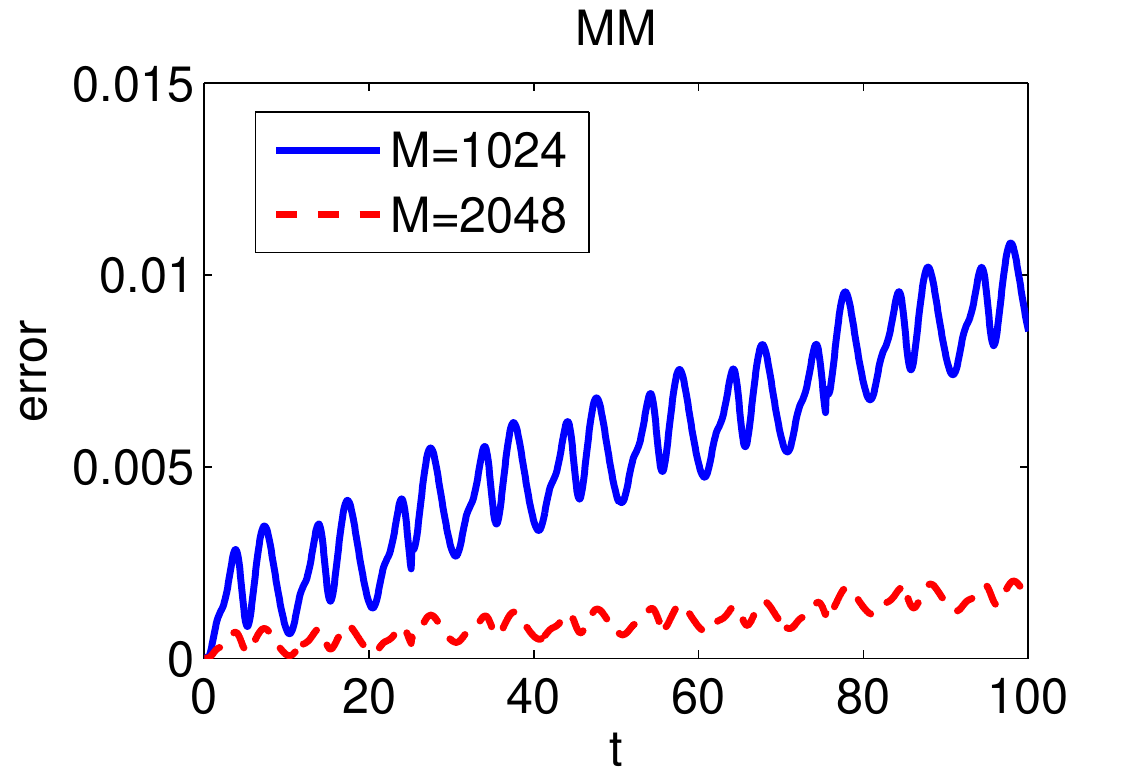,height=5cm,width=4.9cm}
&\psfig{figure=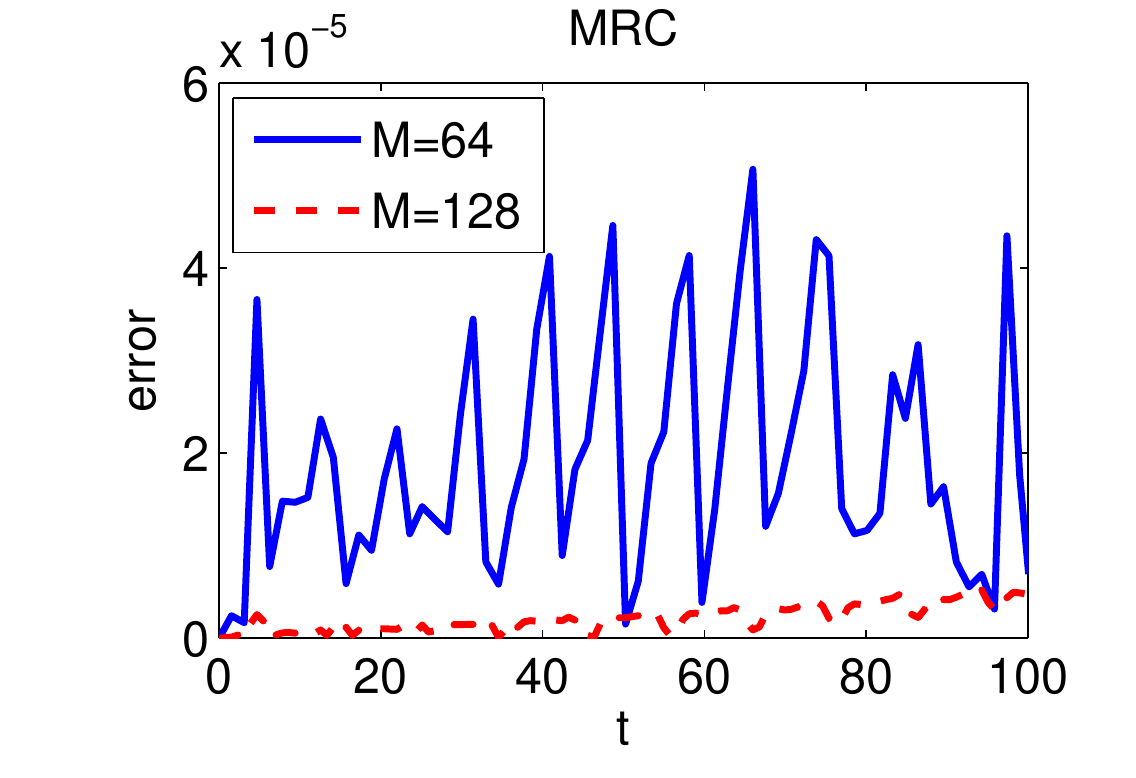,height=5cm,width=4.9cm}
\end{array}$$
\caption{Energy error of TSF, MM (with restart every $T_0=8\pi$) and MRC for example \ref{example1} under $\eps=1/2^{14}$ till $T=32\pi$. $\Delta t=0.0982 $ or $0.0491$ for TSF and MM. }
\label{fig:energy}
\end{figure}


\begin{figure}[t!]
$$\begin{array}{cc}
\psfig{figure=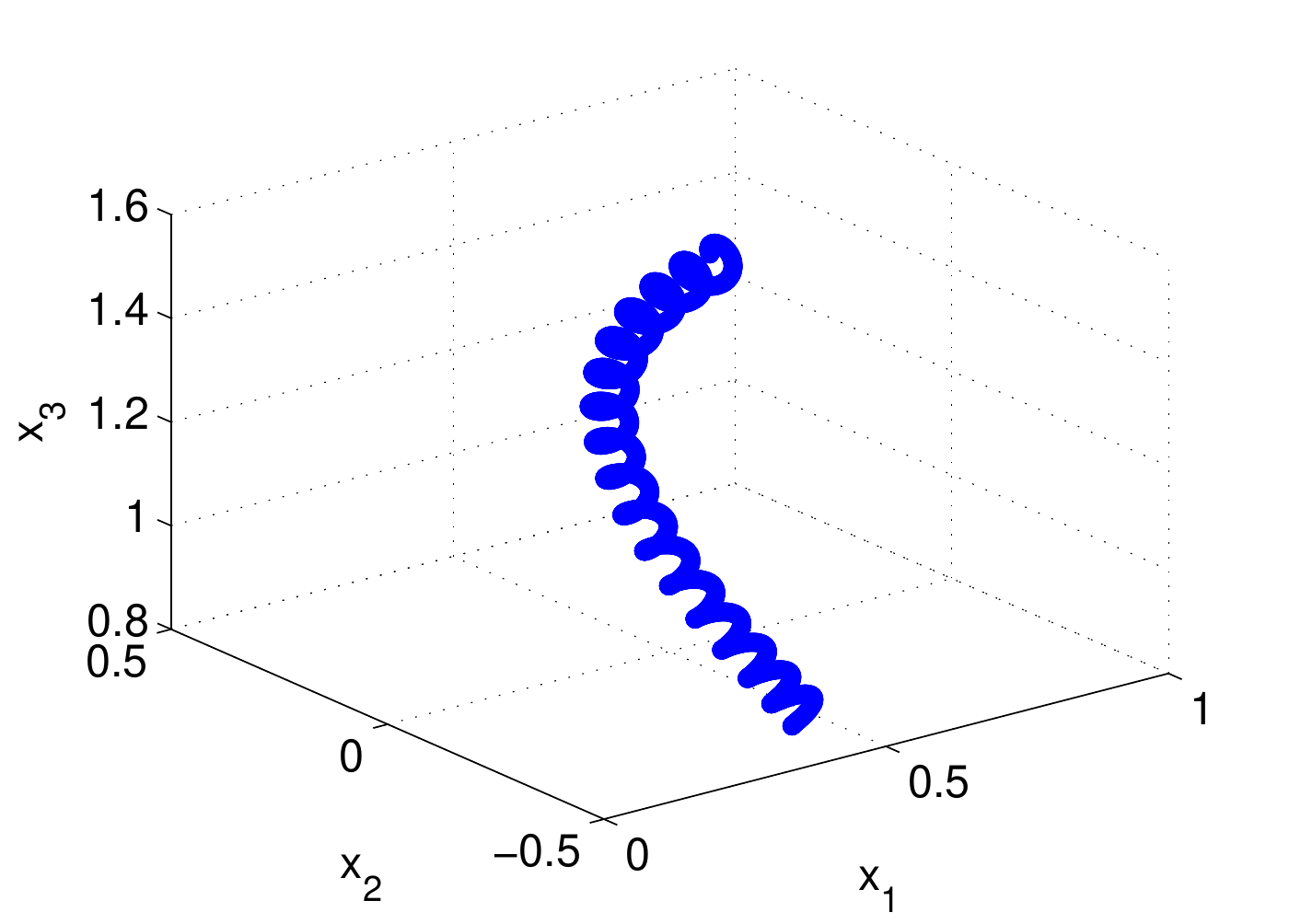,height=5cm,width=7.5cm}
&\psfig{figure=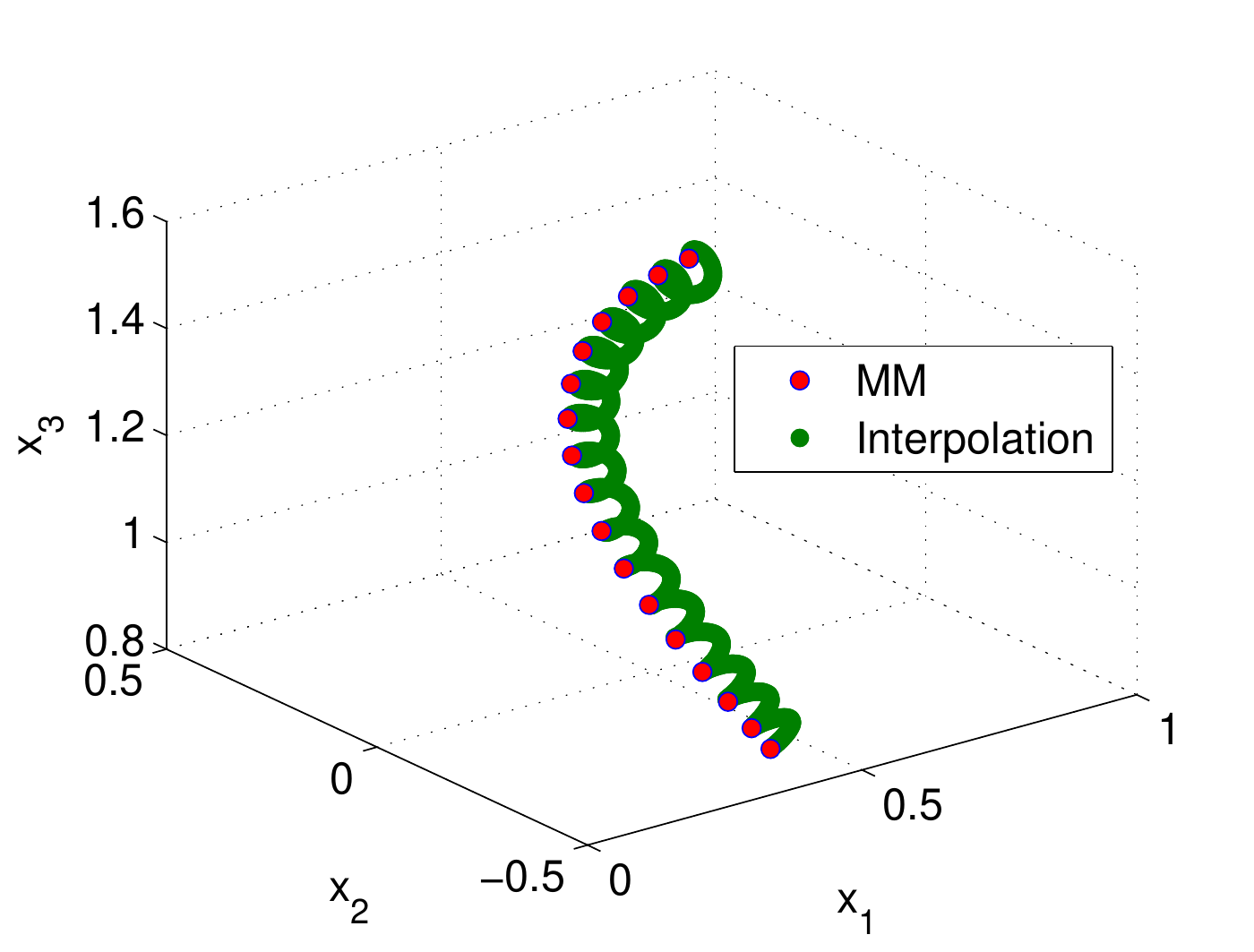,height=5cm,width=7.5cm}
\end{array}$$
\caption{Left: exact trajectory of the particle in example \ref{example1} till $T=\pi$ under $\eps=1/2^5$.
Right: numerical solution of MM under $\Delta t=0.0982$ (red curve) and the fully recovered trajectory with fine linear interpolation.}\label{fig:recovery}
\end{figure}






Finally, we consider the scheme MM ($M=32,\,N_\tau=32$) to illustrate the reconstruction of the whole trajectory for
$t\in [0, \pi]$ (so that $\Delta t=0.0982$).
To do so, we still consider the system \eqref{particle   sysm} with example \ref{example1}, with $\eps=1/2^5$.
In Figure \ref{fig:recovery}, we plot a reference trajectory (obtained with a very small time step) and
the numerical solution obtained by MM using the strategy proposed in Section \ref{sec:mm} (i.e. with a coarse time grid
and using  the linear interpolation strategy in \eqref{full recover}). Using a few grid points,
we can see that the MM method is able to fully restore the complex trajectory (highly oscillatory confined behavior around magnetic field line)
of the particle under trivial computational cost.





\end{example}

\begin{example}\label{example2}\emph{(Varying intensity)}
Secondly, we investigate the numerical performance of the strategy proposed in Section \ref{sec:ex}
for a magnetic field with varying direction and varying intensity on the particle system (\ref{particle   sysm}).

We shall consider the particle system (\ref{example1}) with the same electric field $\bE(\bx)$ as before,
but here the magnetic field is
$$
\bB(\bx)=\left(\begin{split}
&1-\sin(x_2)/2\\
&1+\cos(x_3)/2\\
&1+\cos(x_1)/2
\end{split}\right),\quad \bx=(x_1,x_2,x_3),
$$
which satisfies $\nabla_\bx\cdot\bB=0$ but has a varying intensity in $\bx\in\bR^3$ since
$$
|\bB(\bx)|^2=3+\cos(x_1)+\cos(x_3)-\sin(x_2)+\cos(x_1)^2/4
+\sin(x_2)^2/4+\cos(x_3)^2/4.
$$
We choose the same initial data as before for example \ref{example1} and solve
the problem via the new time formulation (\ref{s}) with the MM method (see Section \ref{sec:ex}).
The reference solution is again obtained by directly solving (\ref{particle   sysm}) with the fourth order
Runge-Kutta method under small step size ($\Delta t=10^{-5}$).

First, we are interested in the error (defined by \eqref{def_err}) against the number of grid points $M$
for the quantities $\bx(t),\,\bv_\parallel(t):=\bv(t)\cdot \bB(\bx(t))\bB(\bx(t)/\|\bB(\bx(t)\|^2$ and $|\bv(t)|$ at $T=1$.
In Figure \ref{fig:xv}, we can observe that the proposed MM scheme converges as number grid points $M$ increases
($\Delta s$ decreases) with uniform second order accurate rate for all $\eps]0,1]$.

Then,  in Figure \ref{fig:energynon}, the time history of the energy error (defined by \eqref{energy error}) of the method with $\Delta s=1/8$ and $\Delta s=1/16$ till a physical time $T=T(s)=100$ and under three different $\eps$ is shown. Let us remark
that the restart strategy is used at every time step. In Figure \ref{fig:energynon}, we observe that the scheme computes the energy (\ref{eneryg par}) with uniform second order accuracy for $\eps\in]0,1]$. Under a rather large step size ($\Delta s\gg\eps$), the scheme is stable in long time computing, and even if a slight linear drift in the energy error is observed, the
energy error  (about $10^{-3}$) is rather good for all $\eps$ considered.
In Figure \ref{fig:energynon}, the relation between the new time $s$ and the physical time $t(s)$ is also plotted
to illustrate that the physical time $t(s)$ is a monotone increasing function.

Finally, we study the dynamics of the magnetic moment defined by
\begin{equation}
\label{magnetic moment}
  I(t)=\frac{1}{2}\frac{|\bv_{\bot}(t)|^2}{|\bB(\bx(t))|},
\end{equation}
which is an analogy of (\ref{magnetic moment PDE}) at the particle level (\ref{particle   sysm}).
We use MM with $\Delta s=1/16$ (so that it is accurate enough) to solve (\ref{particle   sysm}) till $t=100$
with three different values of $\eps$ ($\eps=2^{-9},2^{-10},2^{-11}$).
In Figure \ref{fig:mm}, the relative error on the magnetic moment, i.e. $|I(t_n)-I(0)|/(\eps I(0))$ is displayed as
a function of the rescaled time $s$. Let us remark that the MM scheme captures
this quantity $I(t)$ with uniform second order accuracy for $\eps\in]0,1]$, since $I(t)$ only depends
on $|\bv|$ and $\bv_\parallel$ through $|\bv_{\bot}|^2=|\bv|^2-|\bv_\parallel|^2$.
The deviation of the magnetic moment (\ref{magnetic moment}) behaves as
$|I(t)-I(0)|=O(\eps)$ in the simulation, which is consistent with the results obtained in \cite{Lubich}.
Our scheme captures this adiabatic quantity even when $\Delta s\gg\eps$ whereas the scheme
used in \cite{Lubich} needs $\Delta s<\eps$.





 \begin{figure}[t!]
$$\begin{array}{cc}
&\psfig{figure=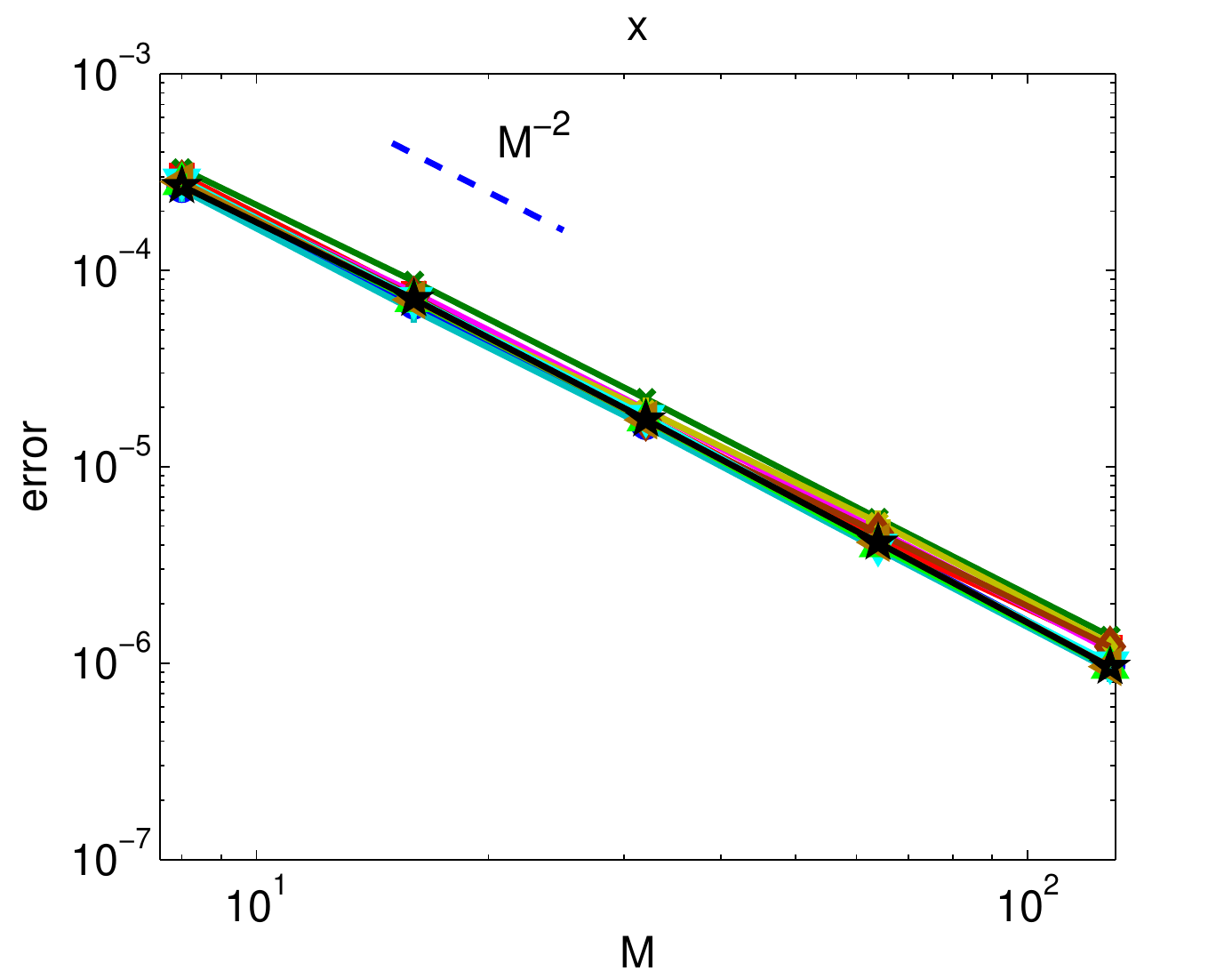,height=5cm,width=4.3cm}
\psfig{figure=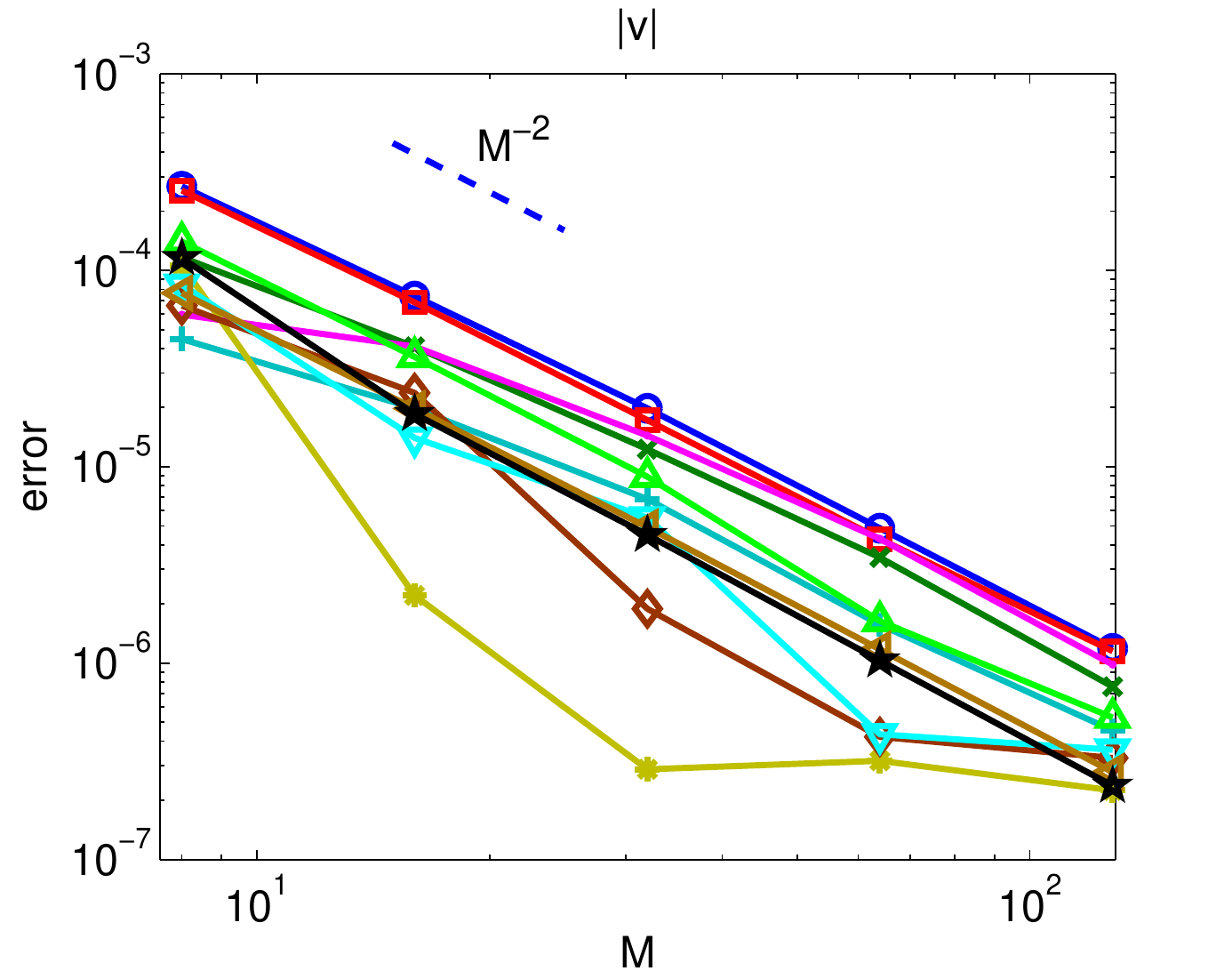,height=5cm,width=4.3cm}
\psfig{figure=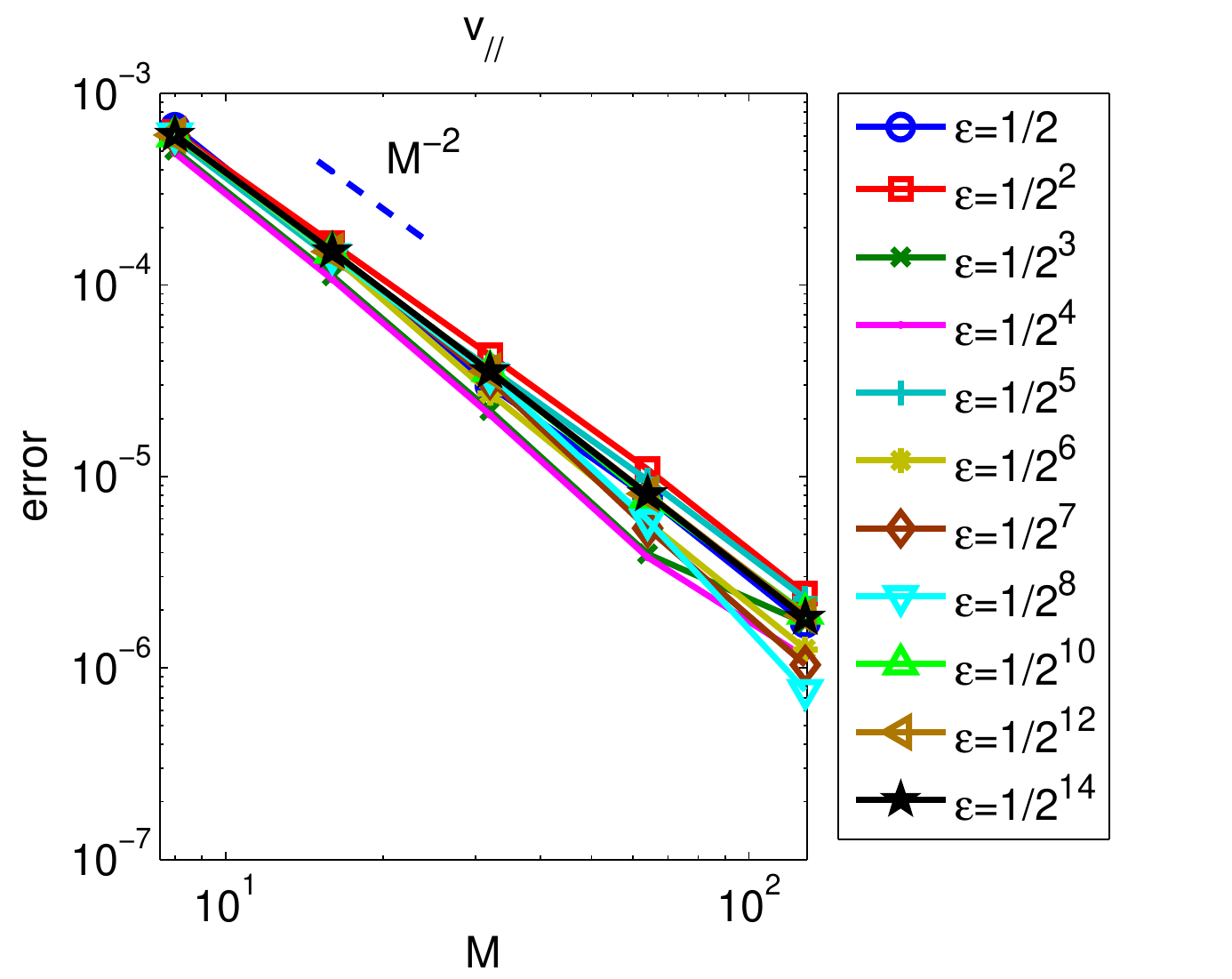,height=5cm,width=5.2cm}
\end{array}$$
\caption{Error of MM under different $M=\Delta s^{-1}$ in $\bx,\,|\bv|$ and $\bv_\parallel$ at $T=1$ in example \ref{example2} of varying intensity. }\label{fig:xv}
\end{figure}
\begin{figure}[t!]
$$\begin{array}{cc}
\psfig{figure=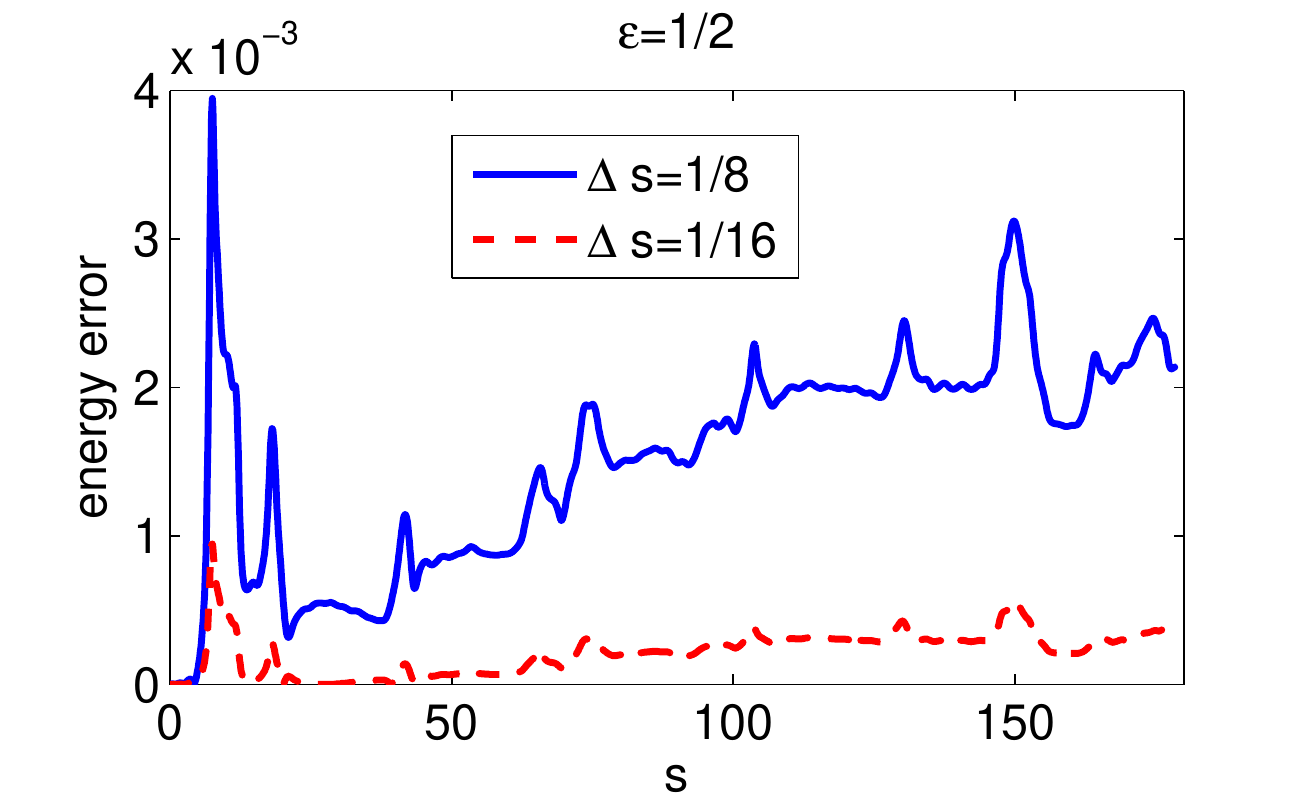,height=4cm,width=7cm}&
\psfig{figure=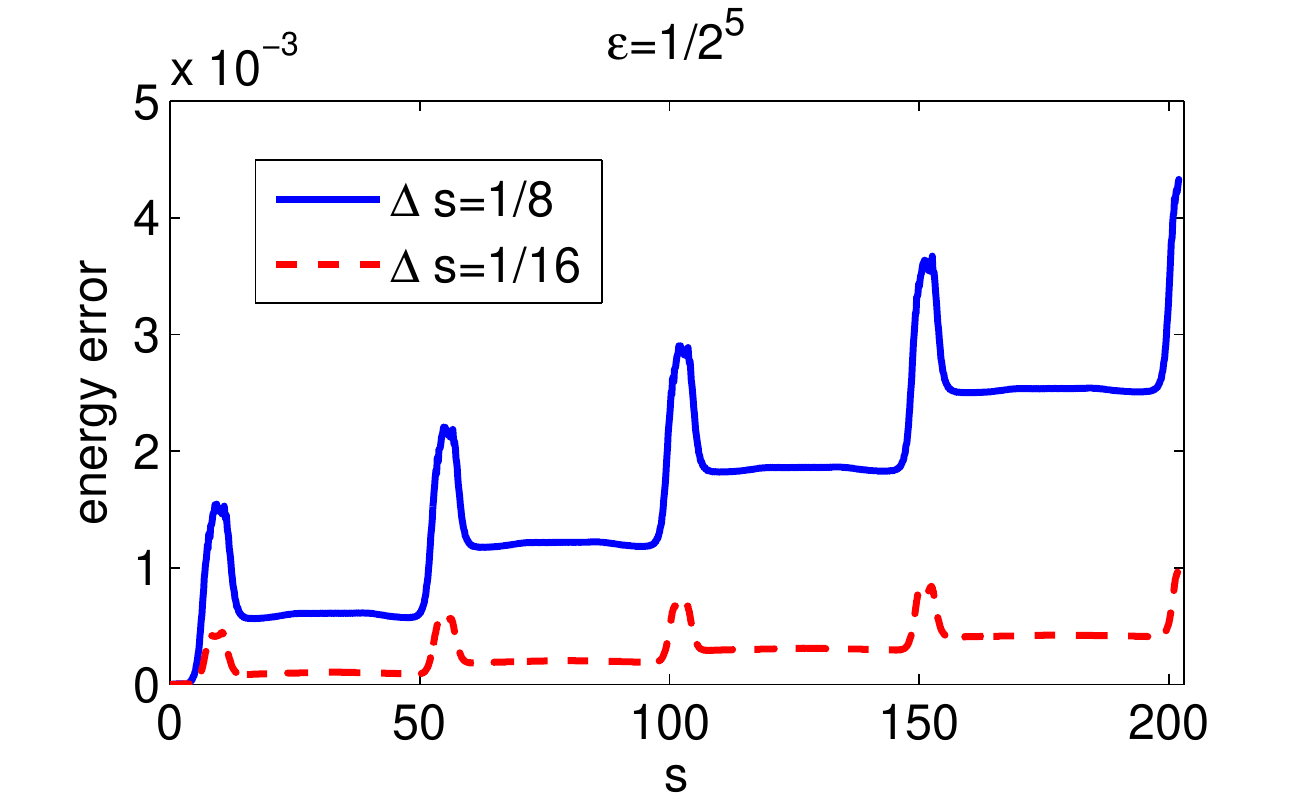,height=4cm,width=7cm}\\
\psfig{figure=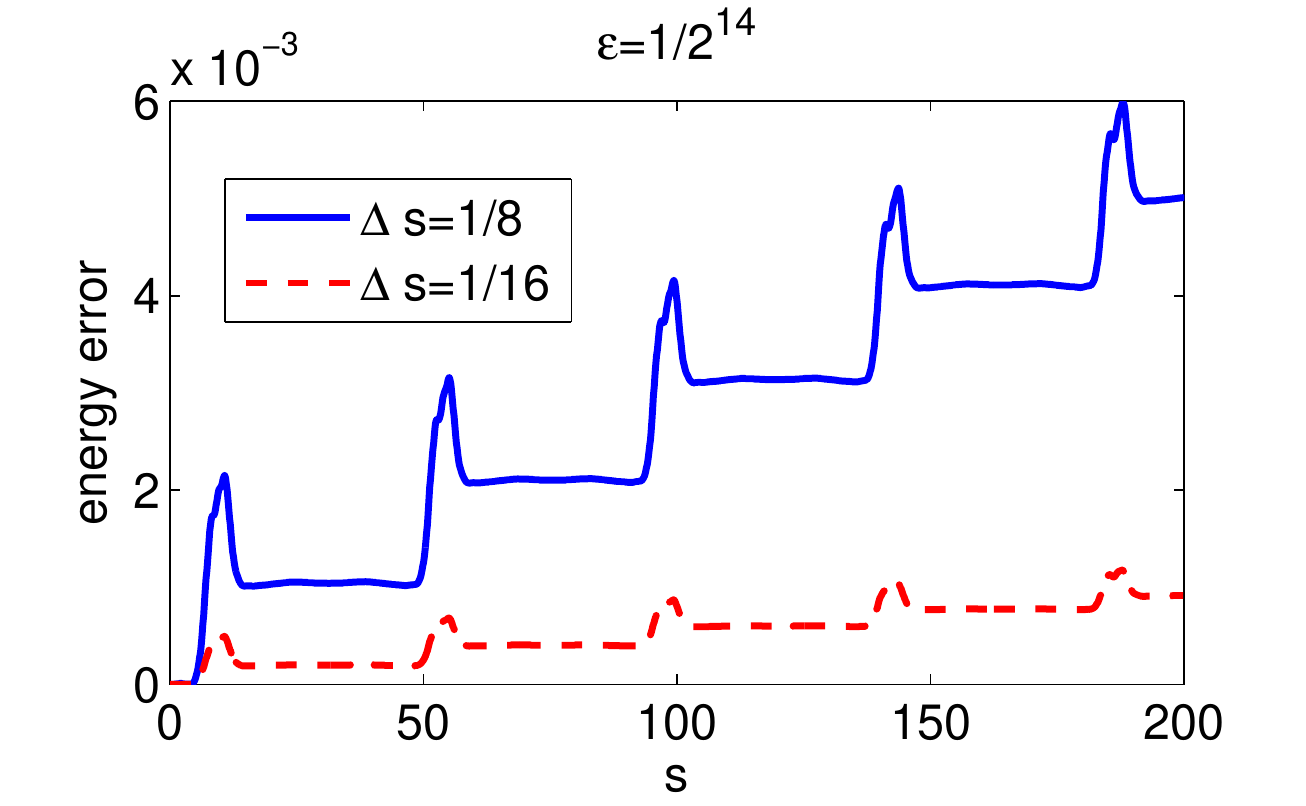,height=4cm,width=7cm}
&\psfig{figure=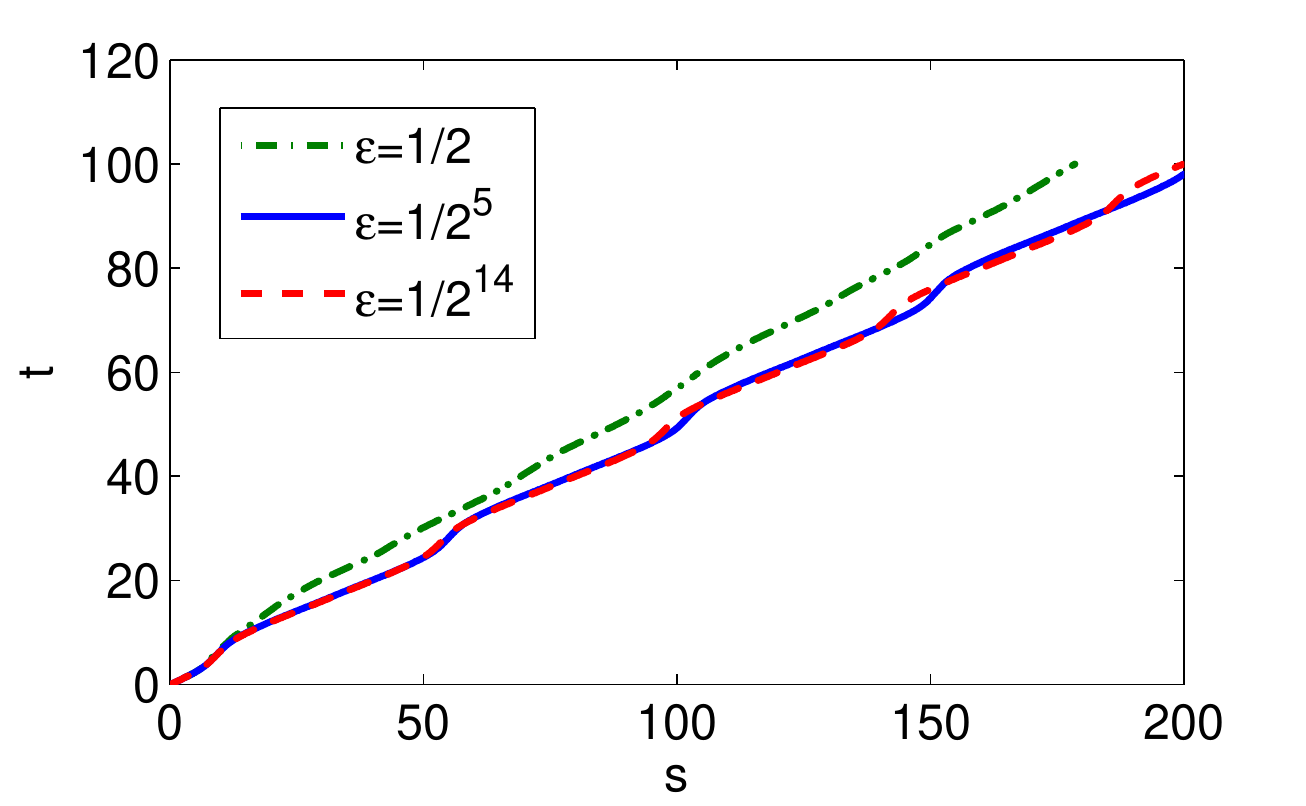,height=4cm,width=7cm}
\end{array}$$
\caption{Energy error of MM (restart each step) for $\eps=1/2,1/2^5,1/2^{14}$ till $t=100$ and the evolution of $t(s)$ in example \ref{example2}. }\label{fig:energynon}
\end{figure}

 \begin{figure}[t!]
$$\begin{array}{cc}
&\psfig{figure=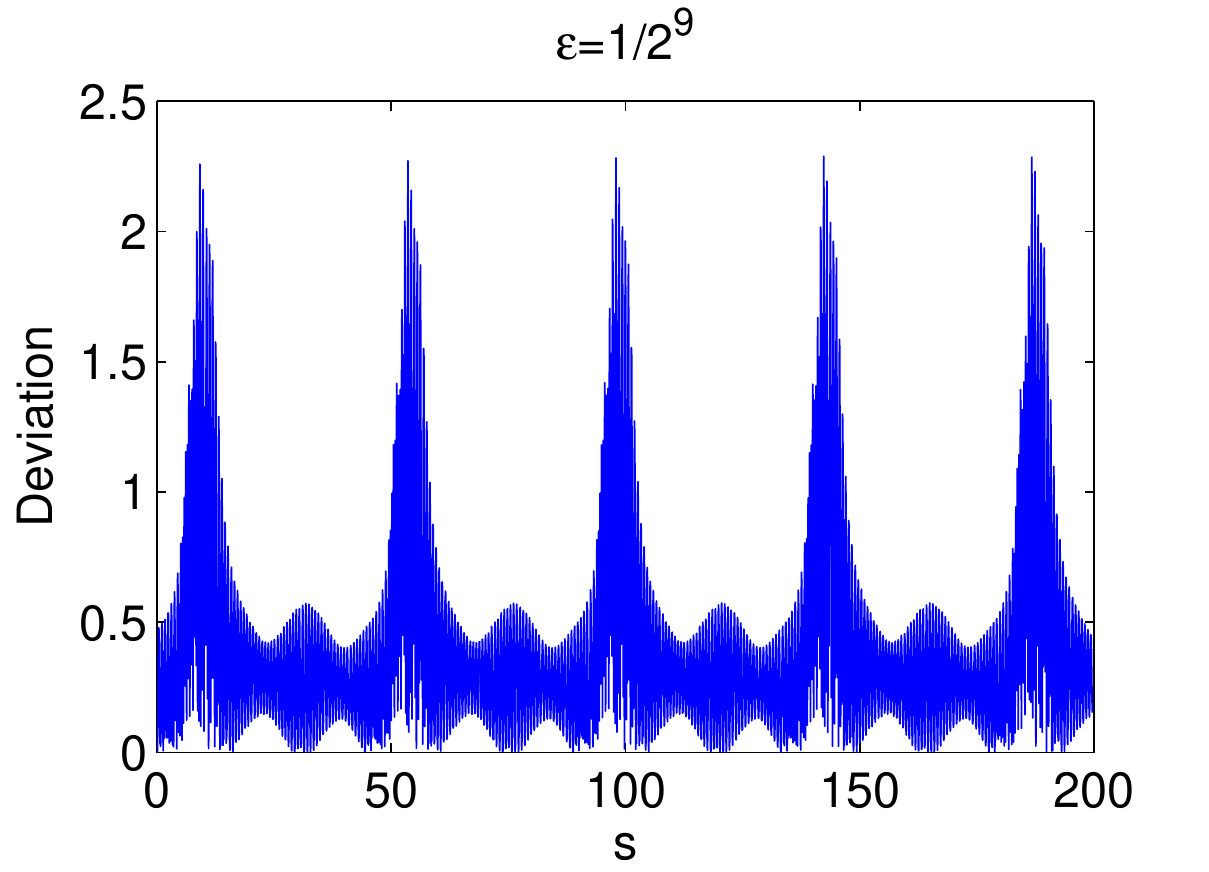,height=5cm,width=4.5cm}
\psfig{figure=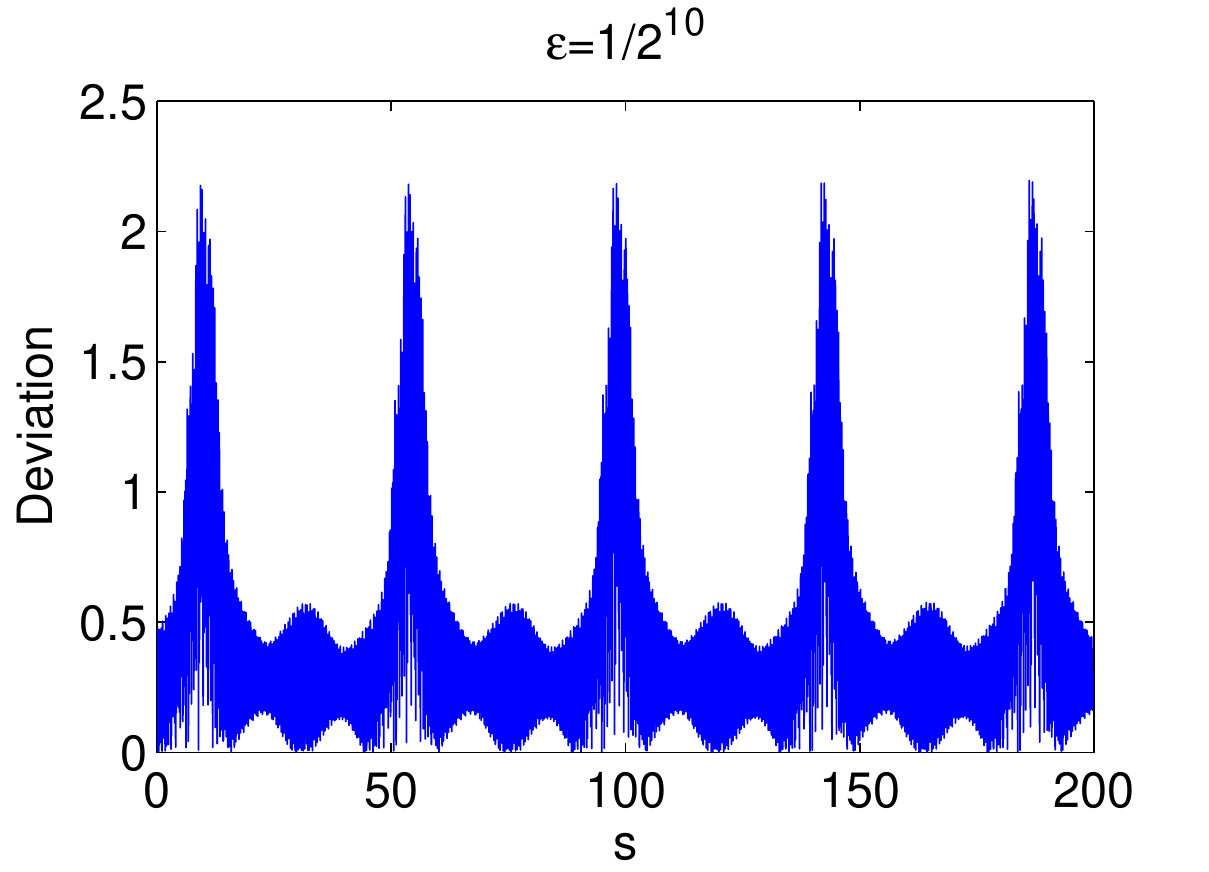,height=5cm,width=4.5cm}
\psfig{figure=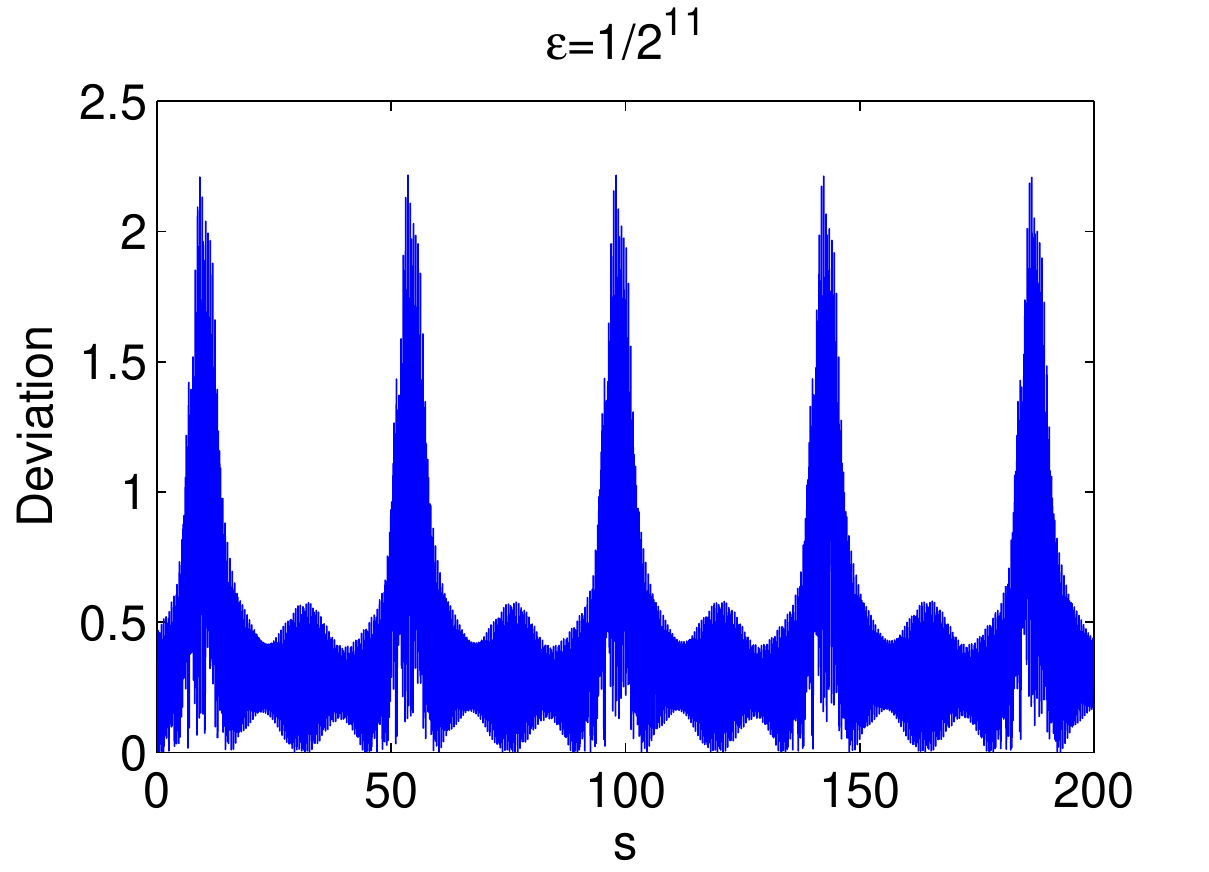,height=5cm,width=4.5cm}
\end{array}$$
\caption{Deviation of the magnetic moment: $\frac{1}{\eps}|I(t)-I(0)|/I(0)$ till $t=100$ in example \ref{example2} under different $\eps$ (computed with $\Delta s=1/16\gg\eps$). }\label{fig:mm}
\end{figure}

\end{example}

\subsection{Simulation of the Vlasov-Poisson system}
In this last part, we focus on the numerical simulation of the full 3D Vlasov-Poisson equation (\ref{eq:1})
using the MRC method. The chosen initial data
is a Maxwellian in velocity and a ring-shape distribution in space with a perturbation in angle \cite{Yang}:
\begin{equation}\label{KH}
f_0(\bx,\bv)=\frac{n_0}{2\pi}\left(1+\eta\cos(k\theta)\right)
\fe^{-5(r-5)^2}\fe^{-\frac{1}{2}|\bv|^2},
  \end{equation}
where $\bx=(x_1, x_2,x_3)$, $\bv=(v_1, v_2,v_3)$,
$r=|\bx|$ and $\theta=\arctan(x_2/x_1)$.
The non-homogenous magnetic field is taken as in \cite{Michel} (screw-pinch setup)
$$
\bB(\bx)=\frac{1}{\sqrt{1+\alpha^2x_1^2+\alpha^2x_2^2}}\left(\begin{array}{c}
 \alpha x_2 \\
-\alpha x_1\\
1
\end{array}\right),$$
which satisfies both $|\bB(\bx)|=1$ and $\nabla_\bx\cdot\bB(\bx)=0$.
The spatial domain is a cartesian geometry $\bx=(x_1, x_2,x_3) \in\Omega=[-8,8]\times[-8,8]\times[0,1]$. We choose
$n_0=100,\eta=0.05, k=4$ and discretize the spatial domain $\Omega$ with $N_{x_1}=N_{x_2}=256$ points in
$x_1,x_2$-directions and $N_{x_3}=4$ points in $x_3$-direction.
As a diagnostic, we consider the following quantity:
\begin{align*}
&\rho^\eps(t,\bx)=\int_{\bR^3}f^\eps(t,\bx,\bv)d\bv,\qquad \bx\in\Omega.
\end{align*}
For the PIC method, we choose $N_p=100 \times N_{x_1}N_{x_2}N_{x_3}$ particles and
the projection of the particles on the spatial grid is done by cubic splines.

In Figures \ref{fig:3d1}, the density $\rho^\eps$ is displayed at different times for $\eps=1/2^5$ with $M=256$
whereas $\alpha=0$ in the magnetic field, so that $\bB$ is homogeneous and aligned with the $x_3$ direction.
There is two different dynamics which can be seen in the results: an instability
develops in the direction orthogonal to the magnetic field (one can see four vortices at time $t=64\pi$)
and a slight parallel dynamics in the plane parallel to the magnetic field. In Figures \ref{fig:3d2} and \ref{fig:3d3},
a non-homogeneous magnetic field is considered
($\alpha=0.003$). We can observe that the dynamics is different dynamics from the homogeneous case.
Indeed, the instability leading to the formation of four vortices is different and one can see stronger non homogeneous
phenomena in the $x_3$ direction due to the expression of the magnetic field.

Finally, in Figure \ref{fig:energy3d}, we plot the energy error for both configurations ($\alpha=0$ and $\alpha=0.003$).
Very good conservations are obtained for long time. Moreover, we consider the relative error between the
Vlasov-Poisson system (\ref{eq:1}) and the asymptotic model (\ref{eq:lm}) as a function of $\eps$, for $\alpha=0.003$.
To do so, we compute the $L^\infty$ norm (in space) of $|\rho^\eps(t=\pi,\bx)-\rho(t=\pi,\bx)|/|\rho^\eps(t=\pi,\bx)|$ at the final time $t=\pi$.
We can see that when $\eps$ decreases, the error is $O(\eps)$, as predicted by the theory.


\begin{figure}[h]
$$\begin{array}{cc}
 \includegraphics[width=7cm,height=4.3cm]{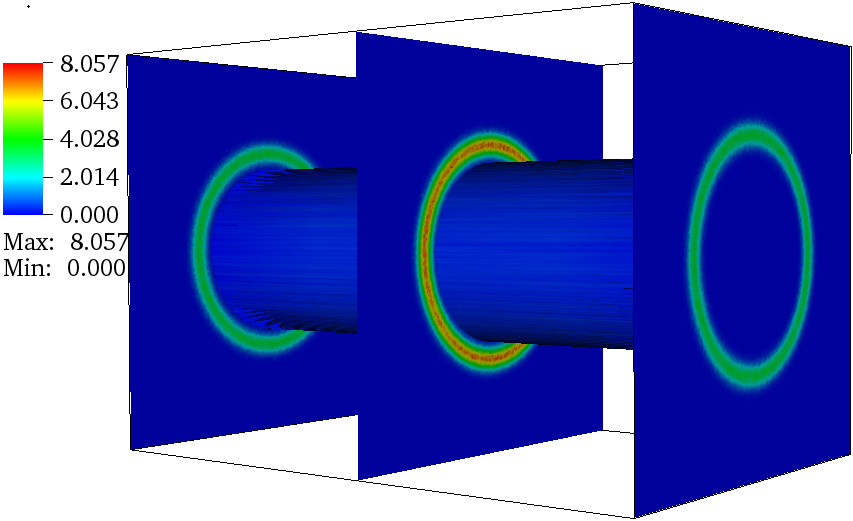}& \includegraphics[width=7cm,height=4.3cm]{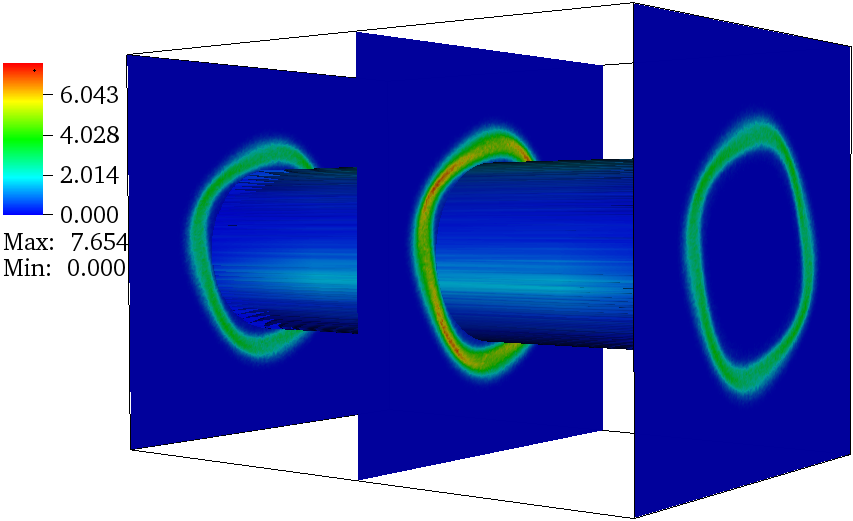}\\
\includegraphics[width=7cm,height=4.3cm]{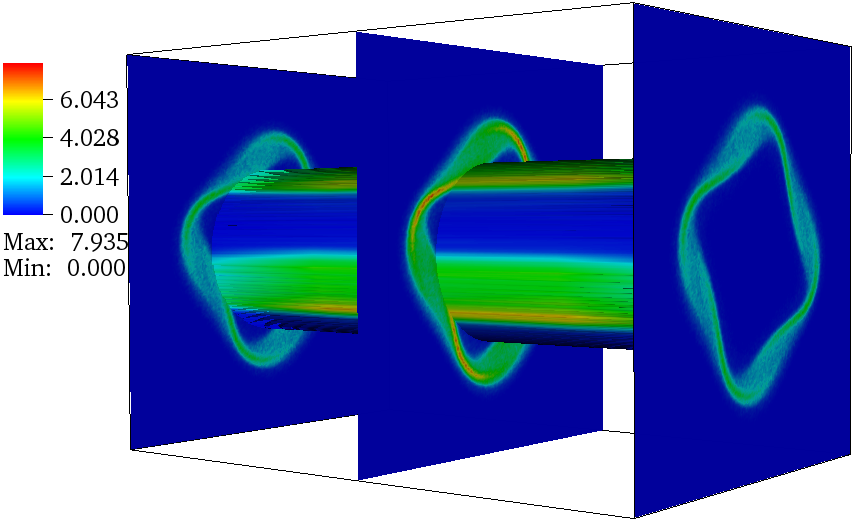}&  \includegraphics[width=7cm,height=4.3cm]{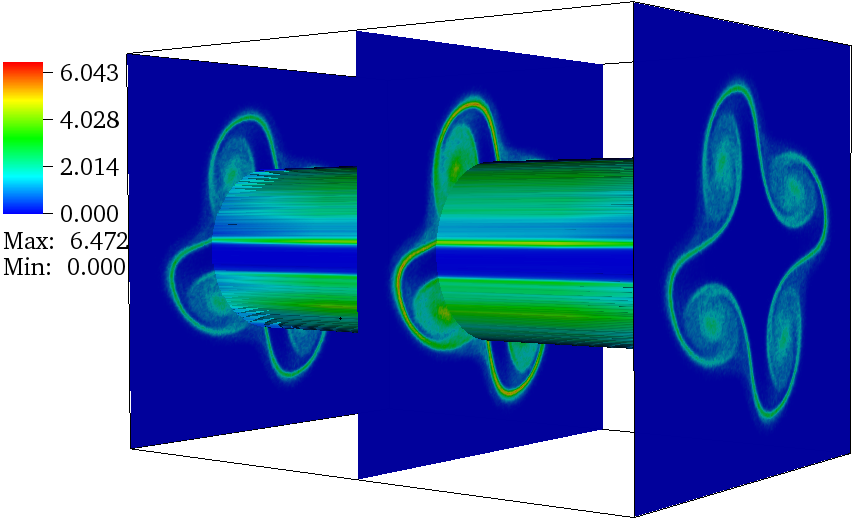}
  \end{array}$$
\caption{Vlasov-Poisson case: pseudo-color snapshots of $\rho^\eps$ under $\eps=1/2^5$ at $t=0,16\pi,32\pi,64\pi$  with initial condition \ref{KH} with
$\alpha=0$. }\label{fig:3d1}
\end{figure}
\begin{figure}[h]
$$\begin{array}{cc}
 \includegraphics[width=7cm,height=4.3cm]{3d1.png}& \includegraphics[width=7cm,height=4.3cm]{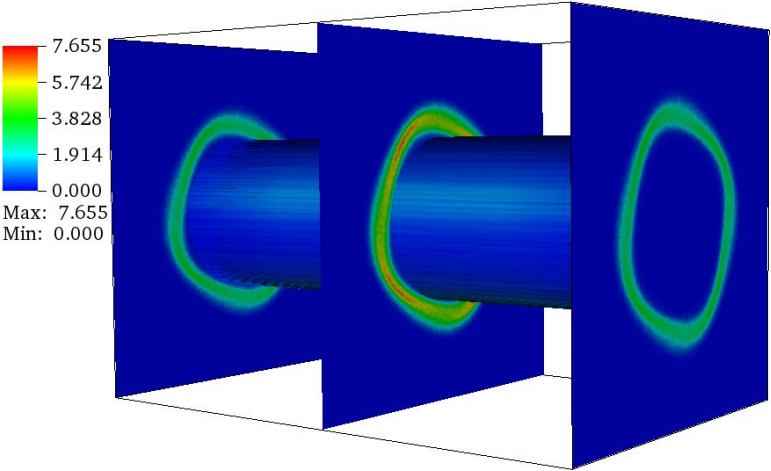}\\
\includegraphics[width=7cm,height=4.3cm]{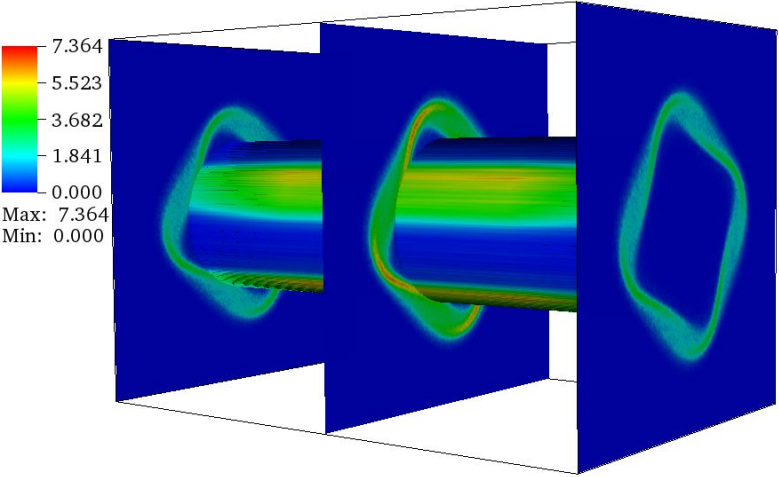}&  \includegraphics[width=7cm,height=4.3cm]{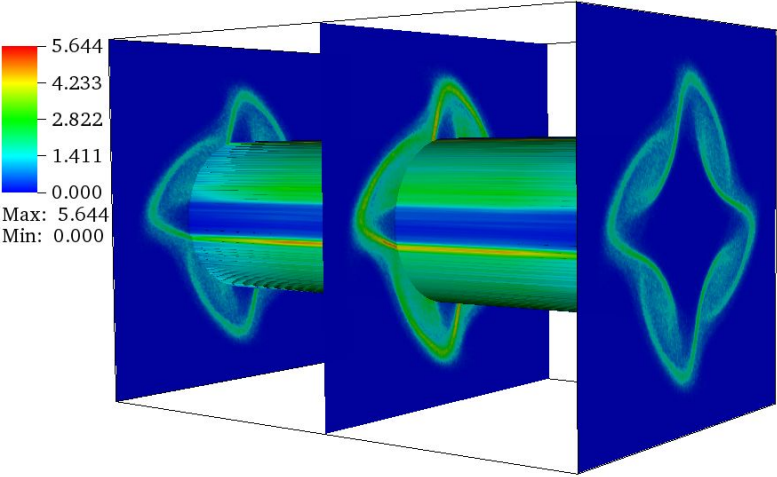}
  \end{array}$$
\caption{Vlasov-Poisson case: pseudo-color snapshots of $\rho^\eps$ under $\eps=1/2^5$ at $t=0,16\pi,32\pi,64\pi$ with initial condition \ref{KH} with
$\alpha=0.003$. }\label{fig:3d2}
\end{figure}

\begin{figure}[h]
$$\begin{array}{cc}
\includegraphics[width=7cm,height=4.3cm]{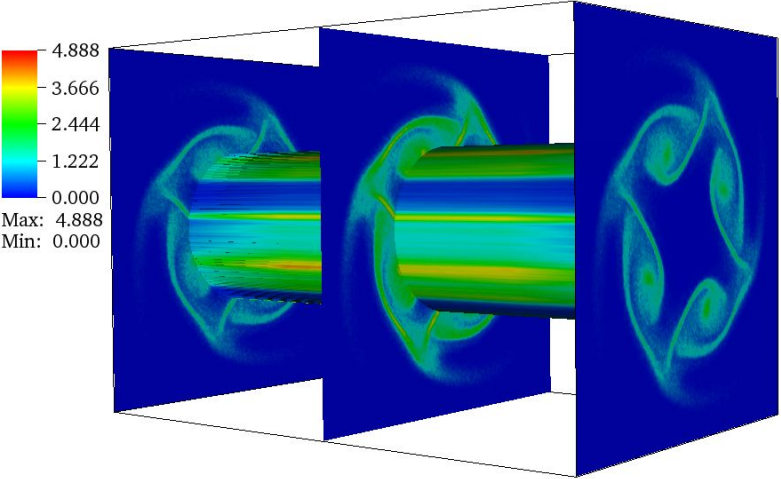}&  \includegraphics[width=7cm,height=4.3cm]{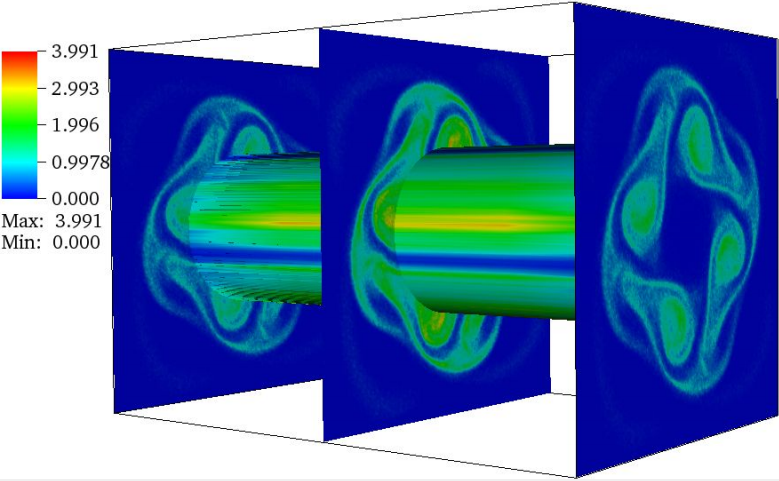}
  \end{array}$$
\caption{Vlasov-Poisson case: pseudo-color snapshots of $\rho^\eps$ under $\eps=1/2^5$ at $t=88\pi,128\pi$ with initial condition \ref{KH} with $\alpha=0.003$. }\label{fig:3d3}
\end{figure}

\begin{figure}[t!]
$$\begin{array}{cc}
\psfig{figure=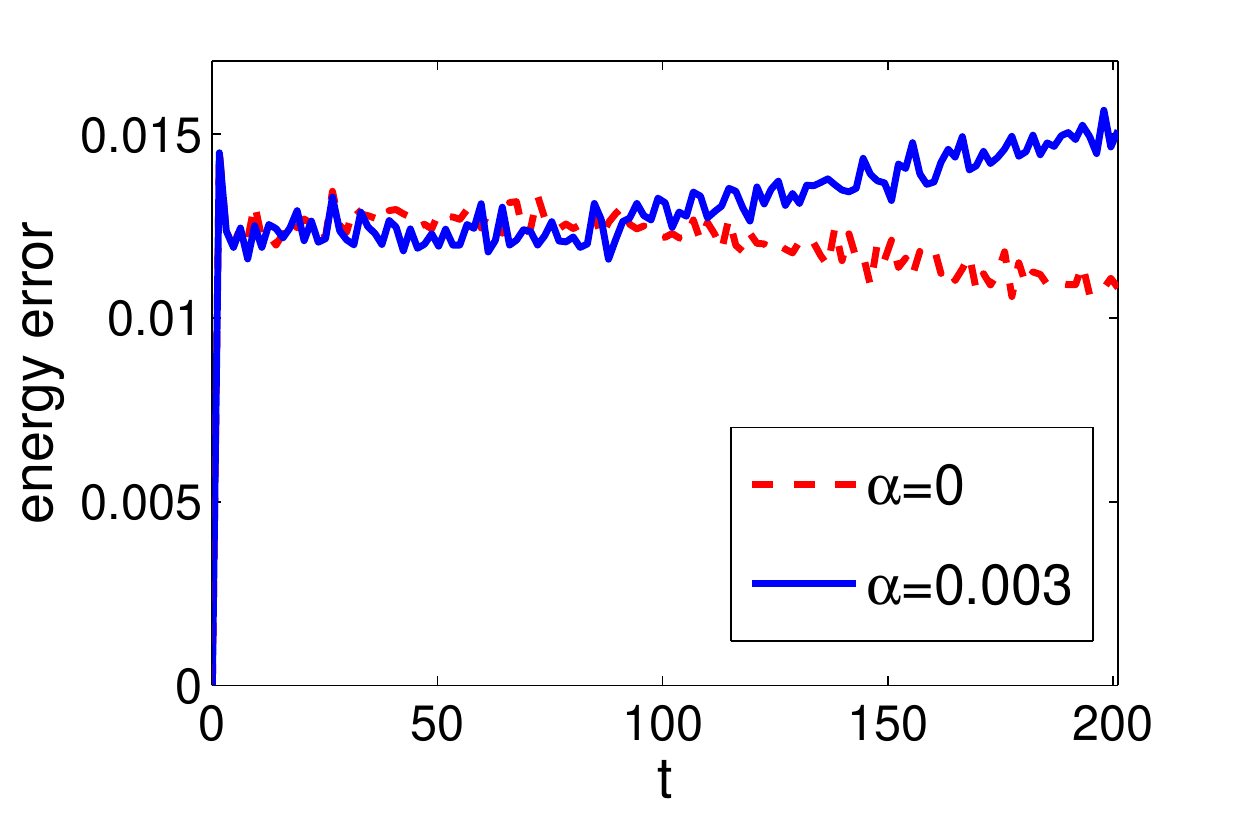,height=4cm,width=7cm}&
\psfig{figure=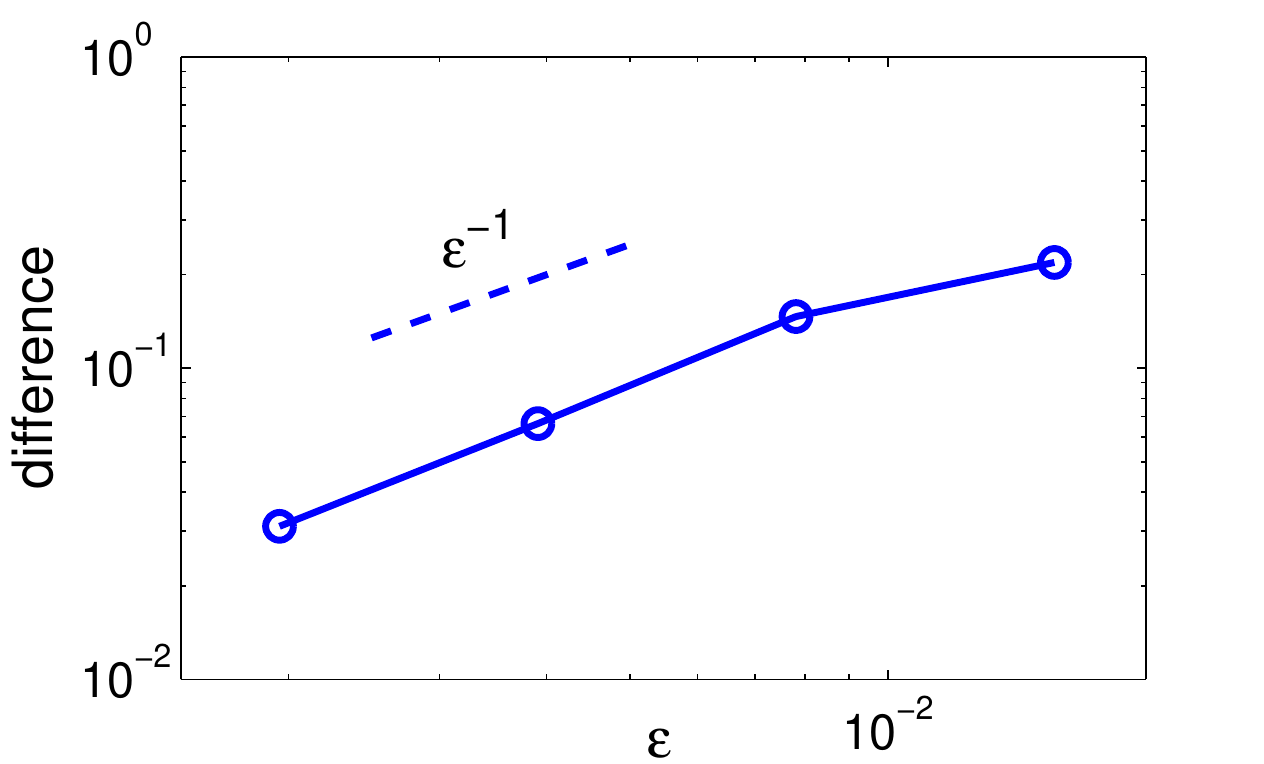,height=4cm,width=7cm}
\end{array}$$
\caption{Vlasov-Poisson case. Left: time history of the energy error with initial condition \ref{KH}. Right:
difference between (\ref{eq:1}) and the limit model (\ref{eq:lm}) (maximum error of $|\rho^\eps(t=\pi,\bx)-\rho(t=\pi,\bx)|/|\rho^\eps(t=\pi,\bx)|$).}\label{fig:energy3d}
\end{figure}



\appendix

\section*{Acknowledgements}
This work is supported by the French ANR project MOONRISE ANR-14-CE23-0007-01. This work has been carried out within the framework of the French Federation for Magnetic Fusion Studies (FR-FCM) and of the Eurofusion consortium, and has received funding from the Euratom research and training programme 2014-2018 and 2019-2020 under grant agreement No 633053. The views and opinions expressed herein do not necessarily reflect those of the European Commission.

\bibliographystyle{model1-num-names}

\end{document}